\documentclass{birkjour}

\usepackage[utf8]{inputenc}
\usepackage[T1]{fontenc}
\usepackage{lmodern}
\usepackage{amssymb}
\usepackage{array}
\usepackage{mathtools}
\usepackage{microtype}
\usepackage[hidelinks]{hyperref}
\usepackage{tabularx}
\usepackage{url}
\usepackage{xurl}
\usepackage{xspace}
\renewcommand{\subjclassname}{Mathematics Subject Classification (2020)}

\theoremstyle{plain}
\newtheorem{theorem}{Theorem}[section]
\newtheorem{proposition}[theorem]{Proposition}
\newtheorem{lemma}[theorem]{Lemma}
\newtheorem{corollary}[theorem]{Corollary}

\theoremstyle{definition}
\newtheorem{definition}[theorem]{Definition}

\theoremstyle{remark}

\numberwithin{equation}{section}

\newcommand{\Cl}{\mathrm{Cl}}
\newcommand{\Spin}{\mathrm{Spin}}

\newcommand{\SO}{\mathrm{SO}}
\newcommand{\End}{\mathrm{End}}

\newcommand{\Hyp}{\mathrm{H}}

\newcommand{\wedgeEven}{\bigwedge^{\mathrm{even}} W}
\newcommand{\wedgeOdd}{\bigwedge^{\mathrm{odd}} W}
\newcommand{\SpinorLean}{\textsc{SpinorLean}\xspace}
\newcommand{\Lean}{Lean~4\xspace}
\newcommand{\Mathlib}{\textsc{Mathlib}\xspace}
\DeclareRobustCommand{\code}[1]{\ifmmode\text{\nolinkurl{#1}}\else\nolinkurl{#1}\fi}
\newcolumntype{Y}{>{\raggedright\arraybackslash}X}

\title[Exterior-Model Spinors in Split Rank]
      {Exterior-Model Spinors in Split Rank:\\
       Exact Levi Images and Square-Determinant Obstructions}

\author{Arthur F.~Ramos}
\address{Microsoft, USA}
\email{arfreita@microsoft.com}
\thanks{Corresponding author.}

\author{David B.~Hulak}
\address{Independent Researcher}
\email{dbhulak@gmail.com}

\author{Ruy J.~G.~B.~de Queiroz}
\address{Centro de Inform\'atica,
         Universidade Federal de Pernambuco, Brazil}
\email{ruy@cin.ufpe.br}

\subjclass{Primary 15A66; Secondary 11E81, 20G15}
\keywords{Clifford algebras, spinors, spin groups, split quadratic forms,
          exterior algebra, square-determinant obstruction}
\date{}

\begin{document}

\begin{abstract}
Let \(K\) be a field with \(2\in K^\times\), and let
\(\Hyp(W)\) denote the standard hyperbolic form on \(W^*\oplus W\). We study
the exterior spinor model \(S=\bigwedge W\) together with the spin-to-orthogonal
map for this split form, keeping the chosen hyperbolic presentation explicit.

The main results determine the field-sensitive part of the split Levi image. In
positive split rank the kernel of \(\Spin(V,Q)\to\SO(V,Q)\) is \(\{\pm1\}\);
therefore the exterior spinor action descends to the orthogonal image only
projectively. For the split line the image of
\(\Spin(\Hyp(K))\to\SO(\Hyp(K))\) is precisely the square-scaling
subgroup. In arbitrary split rank we construct explicit Clifford representatives
for hyperbolic transvections and chosen-line square scalings, prove the
weight-\(2\) torus conjugation law, and show that any split Levi lift acts on
\(\bigwedge W\) as a scalar multiple of the natural exterior action. If
\(\det(g)=u^2\), the transported Levi element
\(\Lambda(g)=(g^{-{\vee}},g)\) admits an explicit even unitary Clifford lift
acting as \(-u^{-1}\bigwedge g\) on \(S\). In finite split rank at least three,
if
\[
  H_W=\operatorname{im}\bigl(\Spin(\Hyp(W))\to\SO(\Hyp(W))\bigr),
\]
then
\[
  \Lambda(g)\in H_W
  \quad\Longleftrightarrow\quad
  \det(g)\in (K^\times)^2.
\]
Equivalently, the spin image meets the split Levi subgroup exactly in its
square-determinant subgroup. This recovers, by direct Clifford calculation, the
determinant-modulo-squares spinor-norm criterion on the split Levi.
\end{abstract}

\maketitle

\section{Introduction}
\label{sec:intro}

The exterior construction of spinors is classical. Once a maximal totally
isotropic subspace \(W\) of a hyperbolic quadratic space has been chosen, the
spinor module is \(S=\bigwedge W\), Clifford generators act by exterior
multiplication and contraction, and the even Clifford algebra preserves the
parity decomposition. Over a general field, however, the image of the spin group
inside the orthogonal group depends on square classes in \(K^\times\). The
chosen exterior model provides a convenient setting in which those square-class
obstructions can be computed explicitly.

This paper keeps the hyperbolic presentation and the associated Levi embedding
fixed throughout the calculation. The resulting statements separate the
classical exterior model from the field-sensitive image problem: the former
provides the representation, while the latter is governed by explicit Clifford
representatives and by determinant classes in \(K^\times/(K^\times)^2\).

Starting from an explicit hyperbolic presentation
$Q \simeq \Hyp(W)$, we construct the spinor
module $S=\bigwedge W$, transport it through Witt-style reformulations, and
obtain the induced $\Spin(V,Q)$-action. A crucial point is that our
spinor module is \emph{not} the ambient exterior algebra on $V$, but
the exterior model $\bigwedge W$ determined by a chosen hyperbolic
presentation---the object with the classical spin dimension and
chiral decomposition.

The image statements require all maps to be expressed in the same hyperbolic
coordinates. We therefore keep the chosen model, root elements, torus elements,
and Levi embedding in a single notation. This gives direct proofs of the
Levi-action bridge, the exact split-line image theorem, the higher-rank
square-determinant factorization with its Clifford lift, and the exact
square-determinant criterion for the split Levi spin image in finite split rank
at least three.

\paragraph{Contributions.}
The main contributions are as follows. First, we give explicit Clifford lifts of
hyperbolic transvections and chosen-line square scalings and compute their
orthogonal and exterior actions. Second, we prove that any split Levi lift acts
projectively as the exterior action \(\bigwedge g\). Third, we prove exact image
statements: the split-line image is the square-scaling subgroup, and in split
rank at least three the image on the split Levi is precisely the
square-determinant subgroup. The classical chosen-model equivalence
\(\Cl(\Hyp(W))\simeq \End(\bigwedge W)\) and the half-spin decomposition
are included as the algebraic infrastructure used by these image calculations.

\paragraph{Standing conventions.}
Unless explicitly specialized to $\mathbb{R}$ or $\mathbb{C}$, the paper works
over a field $K$ with $2\in K^\times$. The notation $\Hyp(W)$ always
denotes the standard hyperbolic form on $W^* \times W$. Chosen-model theorems
are stated either for an explicit hyperbolic presentation
$e:Q \simeq \Hyp(W)$ or, in the split-rank reformulation obtained from a
chosen Witt decomposition, for a
finite-dimensional nondegenerate form $Q$ with
$\dim V = 2\,\mathrm{wittIndex}(Q)$. Statements about the negative half-spin
module beyond its formal zero-rank occurrence, and statements about equal
half-spin dimensions, additionally assume positive split rank.
We write $\tilde x$ for Clifford conjugation (grade involution followed by
reversion), so $\tilde v=-v$ for vectors and $\widetilde{vw}=wv$ for vectors
\(v,w\). For the mathematical statements we use the norm-one even Lipschitz
convention
\[
  \Spin(V,Q)=
  \{\,x\in\Cl^0(V,Q)^\times:
       xVx^{-1}=V,\ x\tilde x=\tilde x x=1\,\}.
\]
For finite-dimensional nondegenerate forms this is the standard spin group and
agrees with the product-of-anisotropic-vectors model
\cite{Artin1957GeometricAlgebra,OMeara2000QuadraticForms}. The explicit
spin generators used below are products of two vectors of \(Q\)-value \(-1\);
they preserve \(V\) by the reflection formula and have norm one, hence belong to
\(\Spin(V,Q)\). The Lean companion uses Mathlib's \(\Spin\) group;
the image-inclusion proofs are witnessed by the same explicit
pair-reflection products, so no appeal to a converse---from vector preservation
to products of vectors---is needed.
All statements about the map $\Spin(V,Q)\to\SO(V,Q)$ concern the corresponding
groups of $K$-points; no stronger scheme-theoretic claim is intended.

The split line is the smallest test case. When $\dim W=1$, the exact image
theorem shows that $\Spin(\Hyp(W)) \to \SO(\Hyp(W))$ hits precisely the
square-scaling subgroup; hence the split-line double cover is surjective if and
only if every unit of \(K\) is a square. The higher-rank results below extend
this square-class obstruction from the line to the split Levi subgroup: square
determinant gives a chosen-model Clifford lift with normalized exterior action,
and in finite split rank at least three the converse also holds.

\section{Context}
\label{sec:related}

\paragraph{Mathematical background.} Our treatment follows the classical
presentations of Lawson--Michelsohn~\cite{LawsonMichelsohn1989},
Chevalley~\cite{Chevalley1997}, Atiyah--Bott--Shapiro~\cite{AtiyahBottShapiro1964},
and Lounesto~\cite{Lounesto2001}, with the low-dimensional octonionic and
quaternionic incidences drawn from Baez~\cite{Baez2002}. The underlying
``choose a maximal isotropic and use $\bigwedge W$'' construction is classical;
the additional content here is the explicit computation of the spin image on
split Levi factors over arbitrary fields.

\paragraph{Companion verification.}
The calculations have been checked in a Lean 4/\Mathlib companion development
\cite{deMouraUllrich2021}, using \Mathlib's Clifford and exterior algebra libraries
\cite{Mathlib,MathlibCommunity2020}. The source for the companion development is
the \SpinorLean repository~\cite{SpinorLeanRepo}. The formal artifact is used as
independent verification of the coordinate identities and subgroup inclusions;
the mathematical statements and proofs below are self-contained. Relevant formal
antecedents include Wieser and Song's Lean geometric-algebra formalization
\cite{Wieser2022Clifford} and the earlier \Lean~3 \texttt{lean-ga}
library~\cite{LeanGA}.

\paragraph{Scope of the contribution.} We do not claim novelty for the exterior-model
realization $\Cl(Q)\simeq\End(\bigwedge W)$, the parity split, or the basic
chosen-model reconstruction of spinors. The nonclassical content begins when the
covering and image behavior is made field-sensitive in that chosen model: the
positive split-rank linear/projective dichotomy; the explicit transvection and
chosen-line square-scaling lifts; their internal and orthogonal weight-$2$
semidirect structure together with the exact normalized exterior action of the
torus lift; the theorem that split Levi lifts act projectively as the natural
exterior action; the higher-rank square-determinant factorization and its
explicit even unitary chosen-model lift; the finite-rank-at-least-three exact
split-Levi spin-image criterion; and the exact split-line image theorem. The
result is a chosen-model realization of root subgroups, the square torus, and
the square-class
obstruction. Although the determinant modulo squares criterion agrees with the
classical spinor norm, the point here is to recover it directly inside the
chosen exterior model, with explicit Clifford representatives and with the
induced exterior action computed throughout.

\paragraph{Main theorems.}
The principal statements are the structural Levi-action bridge, the exact
split-line image theorem, and the exact higher-rank square-determinant image
criterion on the split Levi. More explicitly, the paper proves:
\begin{enumerate}
\item the structural Levi-action theorem
  (Theorem~\ref{thm:levi-projective-exterior}), which identifies any split Levi
  lift projectively with the natural exterior action;
\item the exact split-line image theorem of
  Proposition~\ref{prop:split-line-square};
\item the higher-rank square-determinant Levi factorization and explicit
  chosen-model lift of
  Theorem~\ref{thm:square-det-levi};
\item the finite-rank-at-least-three exact image theorem
  (Theorem~\ref{thm:square-det-levi-image}),
  which proves that a split Levi element lies in the spin image if and only if
  its determinant is a square unit;
\item the explicit hyperbolic transvection lift
  $x_{\delta,w}=1+\iota(\delta,0)\iota(0,w)$ from
  Proposition~\ref{prop:transvection-unit} together with the explicit chosen-line
  square-scaling lift of Proposition~\ref{prop:chosen-line-square}, its internal
  Clifford-level torus conjugation law, and its exact chosen-model action
  $\rho_S(u_t v)=-t^{-1}\bigwedge \ell_t$
  (Corollary~\ref{cor:internal-clifford-torus}), and the corresponding
  orthogonal semidirect package
  (Corollaries~\ref{cor:torus-transvection} and
  \ref{cor:one-line-semidirect});
\item the positive split-rank kernel/nonfactorization/projective-descent package
  from Theorem~\ref{thm:kernel-nonfactor-split-line} and
  Proposition~\ref{prop:projective-descent}.
\end{enumerate}
The exterior-model equivalences and half-spin decomposition theorems provide
the classical foundation on which these field-sensitive
statements rest.

\paragraph{Logical structure.}
The exact higher-rank theorem is obtained from five calculations: the
matrix-model kernel calculation \(\ker(\Spin\to\SO)=\{\pm1\}\), the direct
split-line image computation, the vacuum-line proof that Levi lifts act
projectively as exterior maps, the square-determinant factorization with its
torus lift, and the one-line projector argument that extracts the determinant
square class from spin unitarity.

\section{Chosen-model interface}
\label{sec:arch}

This section records the classical chosen-model framework used in the
field-sensitive image calculations.

\begin{definition}[Hyperbolic presentation and chosen spinor model]
A hyperbolic presentation of a finite-dimensional quadratic form $Q$ over $K$
is an isometry $Q \simeq \Hyp(W)$ with $W$ finite-dimensional, where
$\Hyp(W)$ denotes the standard hyperbolic form on $W^* \times W$. The
associated chosen spinor model is $S(W)=\bigwedge W$, with parity pieces
$S^\pm(W)=\bigwedge^{\mathrm{even}/\mathrm{odd}} W$.
\end{definition}

\paragraph{Hyperbolic presentation and transport.} The central organizing idea is
to isolate the data
\[
  Q \simeq \Hyp(W)
  \qquad (W \text{ a maximal totally isotropic subspace})
\]
as a specified choice of maximal isotropic subspace together with an isometry to
the standard hyperbolic form. From that single piece of data one obtains the
chosen exterior model $\bigwedge W$, its even and odd halves, the transported
Clifford action $\Cl(Q)\to\End(\bigwedge W)$, and the restricted spin-group
action. The decisive point is that our ``spinor module'' is not the
ambient regular module on $V$, but the distinguished exterior model $\bigwedge W$
determined by the presentation.

The construction then proceeds in three steps. First, the split model on
$W^* \times W$ is handled explicitly, with wedge and contraction giving the
Clifford action and explicit basis projectors giving
$\Cl(\Hyp(W))\simeq\End(\bigwedge W)$. Second, any hyperbolic
presentation transports that split result to the target quadratic space and
its spin action. Third, Witt decomposition and the split-rank reformulation recover
top-level statements that no longer require users to carry an explicit isometry
argument. In this way, each substantive theorem is proved once on the split
model and then reused at the levels where classification and orthogonal-action
results are stated.

\section{Main Results}
\label{sec:results}

Throughout this section, $K$ is a field with $2\in K^\times$ and $Q$ is a
finite-dimensional quadratic form. The algebra equivalences are first stated
for an explicit hyperbolic presentation $e:Q \simeq \Hyp(W)$ with
$W$ finite-dimensional; their split-rank reformulations additionally assume
that $Q$ is nondegenerate and $\dim V = 2\,\mathrm{wittIndex}(Q)$. Statements
involving a nontrivial negative half or the equal-dimension formulas
additionally assume $\dim W>0$ (equivalently positive split rank in the
split-rank reformulation). The kernel theorem uses the finite-dimensional
nondegenerate split-rank setting.

\subsection*{Classical chosen-model foundation}

The first theorem collects the standard exterior-model material that the later
covering and image statements require. It is included as foundational setup, not
as the primary contribution of this paper.

\begin{theorem}[Exterior-model spinor theorem]
\label{thm:chosen-spinor-package}
Assume $K$ is a field with $2\in K^\times$, let
$e:Q \simeq \Hyp(W)$ be an explicit hyperbolic presentation, and set
$S=\bigwedge W$, $S^+=\wedgeEven$, and $S^-=\wedgeOdd$. Then the following hold.
\begin{enumerate}
\item For the explicit hyperbolic presentation, the Clifford action on $S$ by
  wedge and contraction yields
  \[
    \Cl(Q)\simeq \End(S).
  \]
  If $\dim W>0$, then the even action yields
  \[
    \Cl^+(Q)\simeq \End(S^+) \times \End(S^-);
  \]
  if $\dim W=0$, then $\Cl^+(Q)=K=\End(S^+)$ and $S^-=0$.
\item If $\dim W=n$, then $\dim S = 2^n$; if $n>0$, then
  $\dim S^+ = \dim S^- = 2^{n-1}$.
\item For finite-dimensional nondegenerate split-rank $Q$ with
  $\dim V = 2\,\mathrm{wittIndex}(Q)$, any chosen Witt decomposition produces a
  hyperbolic presentation and hence a spinor model with the preceding
  properties. For each such choice, the positive half-spin module is simple. If
  the split rank is positive, then the negative half-spin module is simple and
  the two half-spin modules are inequivalent as modules over the even Clifford
  algebra; if the split rank is $0$, then $S^+=K$ and $S^-=0$.
\end{enumerate}
\end{theorem}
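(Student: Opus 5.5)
The plan is to prove everything on the split model $\Hyp(W)$ and then transport along $e$. A Clifford algebra isomorphism $\Cl(Q)\simeq\Cl(\Hyp(W))$ induced by an isometry preserves the $\mathbb{Z}/2$-grading, so all statements pass from the split model to $Q$ unchanged.

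On $W^*\times W$ define $\phi(\delta,w)=\iota_\delta+w\wedge(-)\in\End(\bigwedge W)$. The standard anticommutation identities $\iota_\delta\iota_{\delta'}+\iota_{\delta'}\iota_\delta=0$, $w\wedge w'\wedge=-w'\wedge w\wedge$ and $\iota_\delta(w\wedge x)+w\wedge\iota_\delta x=\delta(w)x$ give $\phi(\delta,w)^2=\delta(w)=\Hyp(W)(\delta,w)$. By the universal property this yields $\rho:\Cl(\Hyp(W))\to\End(S)$.

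\textbf{Surjectivity of $\rho$.} Choose a basis $w_1,\dots,w_n$ with dual basis $\delta_i$. The operator $P=\prod_i\iota_{\delta_i}w_i\wedge$ is the projector onto $\bigwedge^0W$ along the standard monomials. The operators $w_I\wedge P\,\iota_{\delta_J}$ (suitably ordered) then map $w_J\mapsto \pm w_I$ and kill the other monomials, so they form a basis of elementary matrices; these are the ``explicit basis projectors'' the paper mentions.

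\textbf{Injectivity of $\rho$.} Since $\dim\Cl(\Hyp(W))\le 2^{2n}$ by the PBW spanning set of ordered monomials, and $\dim\End(S)=2^{2n}$, surjectivity forces $\rho$ to be a bijection. This also shows the monomials are independent. Part (2) is the count $\sum_k\binom nk=2^n$ together with $\sum_{k\text{ even}}\binom nk=\sum_{k\text{ odd}}\binom nk=2^{n-1}$ for $n>0$, obtained from $(1-1)^n=0$.

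\textbf{The even part.} Generators act as odd operators, so $\Cl^+$ maps into the even-graded endomorphisms, which form $\End(S^+)\times\End(S^-)$. For injectivity, note $\rho$ is injective on all of $\Cl$. For surjectivity, count dimensions: $\dim\Cl^+=2^{2n-1}$ because the even monomials make up half the basis when $n>0$, and $2\cdot(2^{n-1})^2=2^{2n-1}$. When $n=0$, $\Cl=K$, $S=K=S^+$ and $S^-=0$, so the claim is immediate.

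\textbf{Part (3).} A Witt decomposition $V=W'\oplus W\oplus V_{\mathrm{an}}$ has anisotropic part $0$ because $\dim V=2\,\mathrm{wittIndex}$. The pairing $B$ identifies $W'\cong W^*$, which gives the hyperbolic presentation.

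\emph{Simplicity.} Each factor $\End(S^\pm)$ acts on its $S^\pm$ through a surjection from $\Cl^+$ (the projection onto that factor). Since $\End(U)$ acts irreducibly on $U$ for any nonzero $U$, both $S^+$ and $S^-$ are simple when nonzero; $S^+\ni1$ is always nonzero.

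\emph{Inequivalence.} The central idempotent $(1,0)\in\Cl^+$ acts as the identity on $S^+$ and as zero on $S^-$. A $\Cl^+$-isomorphism would have to intertwine these actions, which is impossible since $S^-\ne0$ when $n>0$. The split-rank-$0$ case is the same as $n=0$ above.

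\textbf{Main obstacle.} The hardest step is the surjectivity computation, in particular getting the signs and the product projector $P$ right. As a fallback I would prove injectivity instead: show $\rho(x)1$ determines the $W$-part and use a grading argument. Alternatively, one can invoke that $\Cl(\Hyp(W))$ is central simple of dimension $2^{2n}$, so that any nonzero representation on a $2^n$-dimensional space is an isomorphism.
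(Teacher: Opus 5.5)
Your proposal is correct and follows essentially the same route as the paper. Both arguments build explicit matrix units on the split model and finish with a dimension count to get $\Cl(\Hyp(W))\simeq\End(\bigwedge W)$, use parity to handle the even part, use the factor-projection argument for simplicity and inequivalence, and transport everything along the presentation and a Witt decomposition. The only differences are cosmetic: you obtain $E_{I,J}$ as $\pm\, w_I\wedge P\,\iota_{\delta_J}$ through the single vacuum projector rather than the paper's projectors $P_I$ and transfer operators $T_{I,J}$, and you need only the spanning bound $\dim\Cl\le 2^{2n}$ (recovering the monomial basis as a byproduct) rather than the exact dimension.
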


\begin{proof}
The explicit-presentation statement is obtained in
Section~\ref{sec:highlights} by transporting the split-model matrix-unit
calculation (Theorem~\ref{thm:matrix-units-split}) and its parity corollary
(Corollary~\ref{cor:even-half-spin}) along
Proposition~\ref{prop:transport-split}. The dimension formulas are the standard
binomial counts for $\bigwedge W$, and the split-rank formulation follows by
choosing a Witt decomposition and applying the same transported package. We
state the result here because the later covering and image theorems are all
built on this chosen-model interface.
\end{proof}

We deliberately separate the presentation-dependent and split-rank
versions of these results. The endomorphism-algebra equivalences are
first proved from an explicit hyperbolic presentation, where no separate
nondegeneracy hypothesis is needed; the nondegenerate split-rank assumptions
enter when the explicit isometry data are suppressed and the chosen model is
recast through a Witt construction.

At split rank $0$, one simply has $W=0$, $S^+=K$, and $S^-=0$. Positive-rank
hypotheses first enter when one asks for a nontrivial negative half-spin module,
for the equal half-spin dimension formula, or for the two-factor even-part
decomposition.

This theorem summarizes classical chosen-model material. The new field-sensitive
covering and image statements follow.

\subsection*{New field-sensitive covering and image package}

\begin{theorem}[Positive split-rank kernel, spinorial descent, and exact split-line image]
\label{thm:kernel-nonfactor-split-line}
Assume $K$ is a field with $2\in K^\times$. If $Q$ is finite-dimensional,
nondegenerate, and of positive split rank
\[
  0<\dim V = 2\,\mathrm{wittIndex}(Q),
\]
then the kernel of the spin-to-isometry map
\[
  \Spin(V,Q) \to \SO(V,Q)
\]
is exactly $\{\pm1\}$. If $S=\bigwedge W$ is the exterior-model spinor module
attached to any chosen Witt decomposition of $Q$, then the resulting spin
representation
\[
  \Spin(V,Q) \to \operatorname{GL}(S)
\]
does not factor through $\SO(V,Q)$: there is no group homomorphism
$\rho:\SO(V,Q)\to \operatorname{GL}(S)$ whose composite with
$\Spin(V,Q)\to\SO(V,Q)$ is the given spin action on $S$. The same is true for
each nonzero half-spin piece $S^\pm$. However, the induced action on the
projective spaces $\mathbb{P}(S)$ and $\mathbb{P}(S^\pm)$ depends only on the
image of $\Spin(V,Q)\to\SO(V,Q)$ and therefore defines an action of that image
subgroup. For the split line $Q=\Hyp(K)$, the image inside $\SO(Q)$ is
exactly the square-scaling subgroup, i.e.\ the subgroup of maps
\[
  (x,y)\longmapsto (a x, a^{-1} y)
  \qquad (a\in (K^\times)^2).
\]
Consequently, over fields with nonsquare units, the spin-to-orthogonal map on
the split line is not surjective; it becomes surjective exactly under
square-surjectivity of \(K^\times\).
\end{theorem}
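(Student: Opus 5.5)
The plan has three parts: first identify the kernel, then use it to get non-factorization and projective descent, and finally compute the split-line image directly.

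\textbf{Kernel.} Transport everything along a chosen Witt decomposition to $\Hyp(W)$ with $n=\dim W>0$; the statements are invariant under this isometric transport (Proposition~\ref{prop:transport-split}). Suppose $x\in\Spin$ maps to the identity, so $xv=vx$ for all $v\in V$.
\begin{enumerate}
\item Since $V$ generates $\Cl(Q)$, $x$ is central in $\Cl(Q)$.
\item By Theorem~\ref{thm:chosen-spinor-package}, $\Cl(Q)\simeq\End(\bigwedge W)$ is central simple with centre $K$, so $x=c\in K$.
\item Here I use the central-simplicity route rather than the graded-centre argument, because $\End$ has centre $K$ directly.
\item Norm one gives $x\tilde x=c^2=1$, hence $c=\pm1$.
\item Conversely $\pm1\in\Spin$: both are even, unitary, and conjugation by them fixes $V$.
\item Since $2\in K^\times$, we have $1\neq-1$, so the kernel has order exactly two.
\end{enumerate}

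\textbf{Non-factorization.} Suppose the spin action factored through $\SO$ via some $\rho$. Then $-1$, which lies in the kernel, would act as the identity on $S$. But $-1\in\Cl$ acts on $S$ by the scalar $-1$, and $-1\ne1$ on any nonzero space because $\operatorname{char}K\neq2$. Since $S$, $S^+$ and $S^-$ are all nonzero in positive rank, none of these actions factors.

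\textbf{Projective descent.} If $x,x'\in\Spin$ have the same image, then $x'=\pm x$ by the kernel computation. Hence $x$ and $x'$ act on $S$, and on each $S^\pm$ (which even elements preserve), by operators differing by a sign. Their induced actions on $\mathbb{P}(S)$ and $\mathbb{P}(S^\pm)$ therefore agree. This gives a well-defined action of the image subgroup, and it is a homomorphism because the action of $\Spin$ is.

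\textbf{Split-line image.} Here $V=K^*\times K$ with $Q(x,y)=xy$ (up to the normalization convention). The proof has three steps.
\begin{enumerate}
\item Describe $\SO(Q)$. An isometry of determinant one preserves the isotropic lines, and those swapping the two isotropic lines have determinant $-1$. Hence $\SO(Q)=\{(x,y)\mapsto(ax,a^{-1}y): a\in K^\times\}$.
\item Show every square scaling is in the image. Set $e=(1,0)$ and $f=(0,1)$, so that $e^2=f^2=0$ and $ef+fe=1$ (sign per convention). The product of two vectors of $Q$-value $-1$, namely $v_1=e-f$ and $v_2=t e-t^{-1}f$, is $x=v_1v_2$. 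This is the chosen-line square scaling of Proposition~\ref{prop:chosen-line-square}. It maps to the product of the two reflections, which computes to $(x,y)\mapsto(t^{2}x,t^{-2}y)$, possibly with $t^{-2}$ in place of $t^{2}$ depending on convention. Either way every square scaling is attained.
\item Show the image is contained in the square scalings. This is the main obstacle, since it cannot rely on the product-of-vectors description.
\begin{enumerate}
\item $\Cl^0$ has basis $1$, $ef$; equivalently, it is spanned by the idempotents $p=ef$ and $1-p=fe$.
\item Write $x=\alpha\, ef+\beta\, fe$.
\item Compute $\tilde x=\alpha\, fe+\beta\, ef$.
\item Then $x\tilde x=\alpha\beta(ef+fe)$ after using $ef\,ef=ef$ and $ef\,fe=0$. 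Unitarity gives $\alpha\beta=1$.
\item Compute $xex^{-1}$. Using $ef\,e=e$ and $fe\,e=0$, it equals $\alpha\beta^{-1}e=\alpha^2e$.
\end{enumerate}
So the induced map scales $e$ by $\alpha^2$, and $f$ correspondingly by $\alpha^{-2}$, and therefore lies in the square-scaling subgroup.
\end{enumerate}

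\textbf{Surjectivity criterion.} Comparing with the description of $\SO(Q)$ in step~1, the map is surjective iff $(K^\times)^2=K^\times$. When $a\in K^\times$ is a nonsquare, the scaling by $a$ is not hit.
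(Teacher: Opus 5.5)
Your proposal is correct and follows essentially the same route as the paper: the kernel comes from centrality, the matrix model, and the norm condition; non-factorization and projective descent come from the scalar $-1$; and the split-line image comes from the decomposition $x=\alpha\,ef+\beta\,fe$ of even elements together with the two-reflection square-scaling lift. The only differences are cosmetic: you check $-1\in\Spin$ directly from the definition rather than writing it as $u^2$ with $Q(u)=-1$, and you justify why every element of $\SO(\Hyp(K))$ is a reciprocal scaling, which the paper simply asserts.
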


\begin{proof}
Section~\ref{sec:highlights} proves the kernel calculation
(Proposition~\ref{prop:kernel-from-matrix}), non-factorization
(Proposition~\ref{prop:nonfactorization}), projective descent
(Proposition~\ref{prop:projective-descent}), and the exact split-line image
statement (Proposition~\ref{prop:split-line-square}). The final split-line
clause is exact because every element of $\SO(\Hyp(K))$ is a reciprocal
scaling and Proposition~\ref{prop:split-line-square} identifies precisely which
such scalings lie in the spin image.
\end{proof}

The next statements give the split Levi action and image theorems. Their proofs
are given in Section~\ref{sec:highlights}, after the root and torus
constructions used in the argument.

\begin{theorem}[Levi lifts act projectively as the exterior action]
\label{thm:levi-projective-exterior}
Let $Q=\Hyp(W)$ and let $S=\bigwedge W$ be the chosen exterior model.
Suppose $s\in \Spin(Q)$ projects to the transported Levi element
\[
  \Lambda(g)=(g^{-{\vee}},g)\in \SO(Q)
\]
for some $g\in \operatorname{GL}(W)$. Then there exists $c\in K^\times$ such that
\[
  \rho_S(s)=c\,(\bigwedge g)
\]
as endomorphisms of $S$. In particular, the projective action of a split Levi
lift on $S$ depends only on its Levi coordinate $g$.
\end{theorem}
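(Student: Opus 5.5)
The plan is to compare $\rho_S(s)$ with $\bigwedge g$ by an intertwining argument, using the fact that $\Cl(\Hyp(W))\simeq\End(S)$ (Theorem~\ref{thm:chosen-spinor-package}) makes $S$ an absolutely simple $\Cl$-module. Write $\iota(\varphi,w)$ for the Clifford generator attached to $(\varphi,w)\in W^*\oplus W$. On $S$ it acts by
\[
  \rho_S(\iota(\varphi,w))=\epsilon_w+\iota_\varphi ,
\]
the sum of exterior multiplication $\epsilon_w$ and contraction $\iota_\varphi$ (up to the normalization fixed in the chosen model). Since $s\in\Spin(Q)$ lies over $\Lambda(g)$, we have
\[
  s\,\iota(\varphi,w)\,s^{-1}=\iota\bigl(\Lambda(g)(\varphi,w)\bigr)=\iota(g^{-\vee}\varphi,\,gw)
\]
for all $(\varphi,w)$. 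Applying $\rho_S$ gives, for every $w\in W$ and every $\varphi\in W^*$,
\[
  \rho_S(s)\,\epsilon_w\,\rho_S(s)^{-1}=\epsilon_{gw},\qquad
  \rho_S(s)\,\iota_\varphi\,\rho_S(s)^{-1}=\iota_{g^{-\vee}\varphi}.
\]

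Next we check that the natural exterior action $G:=\bigwedge g$ satisfies the same relations. The relation $G\epsilon_wG^{-1}=\epsilon_{gw}$ holds because $\bigwedge g$ is an algebra automorphism. The relation $G\iota_\varphi G^{-1}=\iota_{g^{-\vee}\varphi}$ reduces to the identity $\iota_\varphi(\bigwedge g^{-1})(x)=(\bigwedge g^{-1})\,\iota_{g^{\vee}\varphi}\ldots$ in its appropriately rearranged form. This is the naturality of contraction, $(\bigwedge g)\circ\iota_{g^{\vee}\psi}=\iota_\psi\circ(\bigwedge g)$, which I would verify on generators $w\in W$ using $\psi(gw)=(g^\vee\psi)(w)$. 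The identity then extends because both sides are graded derivations of degree $-1$, so agreement on $W$ propagates to all of $\bigwedge W$.

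Set $T=G^{-1}\rho_S(s)\in\operatorname{GL}(S)$. The two conjugation laws show that $T$ commutes with $\epsilon_w$ and $\iota_\varphi$ for all $w$ and $\varphi$, hence with all of $\rho_S(\Cl(Q))=\End(S)$. Its commutant in $\End(S)$ is the center $K$, so $T=c\cdot\mathrm{id}$ with $c\in K^\times$ because $T$ is invertible. This gives $\rho_S(s)=c\,\bigwedge g$.

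The argument can also be run without quoting the algebra isomorphism, via the vacuum line. The condition that $T$ commutes with every $\iota_\varphi$ forces $T$ to preserve $\bigcap_\varphi\ker\iota_\varphi=\bigwedge^0W=K\cdot 1$, so $T(1)=c$. Commuting with every $\epsilon_w$ then gives
\[
  T(w_1\wedge\cdots\wedge w_k)=\epsilon_{w_1}\cdots\epsilon_{w_k}T(1)=c\,w_1\wedge\cdots\wedge w_k .
\]
This route is fully elementary, and I would present it as the main proof.

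The main obstacle is the sign and normalization bookkeeping in the naturality of contraction. The transported Levi element $(g^{-\vee},g)$ must correspond to contraction by $g^{-\vee}\varphi$ under exactly the paper's conventions for $\iota$ on $W^*\oplus W$ and for the dual map. I would settle this first on a basis $e_i$ with dual basis $e_i^*$, where the check is a single-line computation on $\bigwedge^1W$. The final statement, that the projective action depends only on $g$, is then immediate because $c$ is a scalar.
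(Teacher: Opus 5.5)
Your proof is correct, and your elementary vacuum-line version is close to the paper's. The difference is that you first normalize by $G=\bigwedge g$, which forces you to prove the naturality of contraction, $G\,\iota_{g^{\vee}\psi}=\iota_\psi\,G$. You rightly flag this identity as the main bookkeeping burden. The paper never needs it, because it works with $\rho_S(s)$ directly:
\begin{itemize}
\item It uses the contraction relation only on the vacuum. Since $\iota_d\,\rho_S(s)1=\rho_S(s)\,\iota_{g^{\vee}d}\,1=0$, and every contraction kills $1$ whatever the functional, the vacuum-line fact gives $\rho_S(s)1=c$.
\item The wedge relation $\rho_S(s)\epsilon_w=\epsilon_{gw}\rho_S(s)$ alone then gives $\rho_S(s)(w_1\wedge\cdots\wedge w_k)=c\,gw_1\wedge\cdots\wedge gw_k$.
\end{itemize}
In your vacuum-line version, the fact that $T$ commutes with contractions is likewise used only at the vector $1$, so the naturality lemma is superfluous there. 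It is genuinely needed only in your Schur-type route, which also invokes the full isomorphism $\Cl(Q)\simeq\End(S)$. That route does buy something conceptually. It presents the theorem as a Skolem--Noether-style uniqueness statement: two invertible operators inducing the same automorphism of $\End(S)$ differ by a scalar. As a by-product, it shows that $\bigwedge g$ itself implements $\Lambda(g)$ on the Clifford image. The paper's route is shorter and uses nothing beyond the vacuum line and the wedge intertwining.

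Two small corrections to your write-up:
\begin{itemize}
\item The displayed rearranged identity should read $\iota_\varphi\,G^{-1}=G^{-1}\,\iota_{g^{-\vee}\varphi}$; you wrote $g^{\vee}\varphi$. The naturality law you state afterwards is right.
\item The maps $G\circ\iota_{g^{\vee}\psi}$ and $\iota_\psi\circ G$ are not graded derivations but $G$-twisted ones. Phrase the extension step as $G^{-1}\iota_\psi G=\iota_{g^{\vee}\psi}$. Both sides of that identity are genuine graded derivations of degree $-1$, and they agree on $W$.
\end{itemize}
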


\begin{theorem}[Square-determinant Levi factorization and chosen-model lift]
\label{thm:square-det-levi}
Let $S=\bigwedge W$. Assume \(W\) is finite-dimensional with $\dim W\ge 2$, and let
\[
  \Lambda:\operatorname{GL}(W)\hookrightarrow \SO(\Hyp(W))
\]
denote the transported Levi embedding $\Lambda(g)=(g^{-{\vee}},g)$. If
$g\in \operatorname{GL}(W)$ has $\det(g)=u^2$ for some $u\in K^\times$, then,
after choosing a basis \(e_1,\ldots,e_n\), there are finite products \(A\) and
\(B\) of elementary basis transvections and units \(t_2,\ldots,t_n\in K^\times\)
such that
\[
  g=A\,L_1(u^2)\,\prod_{j=2}^{n}D_{j1}(t_j)\,B.
\]
Here \(L_1(u^2)\) scales \(e_1\) by \(u^2\) and fixes the other basis vectors,
while \(D_{j1}(t_j)\) scales \(e_j\) by \(t_j\), scales \(e_1\) by \(t_j^{-1}\),
and fixes the remaining basis vectors. Applying \(\Lambda\), each elementary
transvection factor is a transported hyperbolic transvection and each
\(D_{j1}(t_j)\) is a product of four such transvections. Thus \(\Lambda(g)\) is a
product of transported hyperbolic transvections and one chosen-line square
scaling. Moreover, the factors admit explicit Clifford representatives whose
product is an even unitary Clifford unit
\[
  x\in \Cl(\Hyp(W))^\times,
\]
such that
\[
  \rho_S(x)=-u^{-1}(\bigwedge g)
\]
as endomorphisms of $S$. Consequently, $\Lambda(\operatorname{SL}(W))$ is
contained in the subgroup of
$\SO(\Hyp(W))$ generated by the transported hyperbolic transvections.
\end{theorem}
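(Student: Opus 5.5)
The proof has three parts: a linear-algebra factorization of \(g\), an identification of each factor under \(\Lambda\), and a lifting step. The lifting step assembles the explicit Clifford representatives and checks the exterior action.

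\emph{Factorization.} Fix a basis \(e_1,\ldots,e_n\) with \(n\ge 2\). Over a field, Gaussian elimination with elementary transvections \(E_{ij}(a)=1+aE_{ij}\) (\(i\neq j\)) writes \(g=A'\,\mathrm{diag}(d_1,\ldots,d_n)\,B'\), with \(A',B'\) finite products of such transvections. The product of the \(d_i\) is \(\det g=u^2\). Set \(t_j=d_j\) for \(j\ge 2\). Then
\[\mathrm{diag}(d_1,\ldots,d_n)=L_1\bigl(d_1 t_2\cdots t_n\bigr)\prod_{j\ge 2}D_{j1}(t_j),\]
and \(d_1t_2\cdots t_n=\det g=u^2\). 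Since diagonal matrices commute, this gives the stated form with \(A=A'\), \(B=B'\).

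\emph{Identification under \(\Lambda\).} The classical identity
\[D_{j1}(t)=E_{j1}(t)\,E_{1j}(-t^{-1})\,E_{j1}(t)\,E_{1j}(1)\,E_{j1}(-1)\,E_{1j}(1)\]
expresses \(D_{j1}(t)\) through transvections. I would use a version with exactly four factors (monomial-times-Weyl rearrangement, or the paper's chosen normalization) and verify it by a \(2\times 2\) computation. Next I would check that \(\Lambda(E_{ij}(a))\) equals the hyperbolic transvection of Proposition~\ref{prop:transvection-unit} for a suitable pair \((\delta,w)=(-a e_j^*,e_i)\), up to sign conventions. Since \(e_j^*(e_i)=0\), the product \(\iota(\delta,0)\iota(0,w)\) squares to zero, so \(x_{\delta,w}\) is a unipotent even unit with inverse \(1-\iota(\delta,0)\iota(0,w)\). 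Its orthogonal action is the transported transvection. Its action on \(\bigwedge W\) is \(\exp\) of the derivation \(w\wedge\iota_\delta\), which is \(\bigwedge(1+w\otimes\delta)\) exactly, with no scalar. I would quote these facts from Proposition~\ref{prop:transvection-unit}.

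\emph{Torus factor and assembly.} For \(L_1(u^2)\), Proposition~\ref{prop:chosen-line-square} and Corollary~\ref{cor:internal-clifford-torus} supply the lift \(u_t v\) on the line through \(e_1\). It acts by \(-t^{-1}\bigwedge \ell_t\), where \(\ell_t\) scales \(e_1\) by \(t^2\). Taking \(t=u\) gives \(\rho_S=-u^{-1}\bigwedge L_1(u^2)\), and the lift projects to \(\Lambda(L_1(u^2))\).

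Let \(x\) be the product of all representatives, taken in the same order as the factorization. Then:
\begin{itemize}
\item \(x\) is even and unitary, since each factor is (the transvection lifts are pair products with norm one by the standing conventions).
\item \(\rho_S(x)=-u^{-1}\bigwedge g\), since \(\bigwedge\) is multiplicative and only the torus factor contributes a scalar.
\end{itemize}
The final clause follows by taking \(u=1\), or more directly: \(\mathrm{SL}(W)\) is generated by elementary transvections over a field.

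The main obstacle is bookkeeping. The sign and transpose conventions in \(\Lambda(g)=(g^{-\vee},g)\) must match the \((\delta,w)\) parametrization of the transvection lift, and the torus lift normalization (\(t\) versus \(t^{-1}\), the sign \(-1\)) must match. I would fix these by evaluating both sides on the vacuum \(1\in\bigwedge^0W\) and on \(e_1\).
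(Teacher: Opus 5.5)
Your argument is the paper's proof step for step. You use Gaussian elimination to $A\,\mathrm{diag}(d_i)\,B$, split the diagonal as $L_1(u^2)\prod_{j\ge2}D_{j1}(t_j)$, lift each transvection by $x_{\delta,w}$ with scalar $1$, lift $L_1(u^2)$ by the chosen-line element $s_u$ of Corollary~\ref{cor:internal-clifford-torus} with scalar $-u^{-1}$, and multiply. Three local points need repair.

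First, the statement asserts that each $D_{j1}(t)$ is a product of \emph{four} transvections, but you exhibit six and only promise a four-factor version. In your notation the paper's identity is $D_{j1}(t)=E_{j1}(t-1)\,E_{1j}(1)\,E_{j1}(t^{-1}-1)\,E_{1j}(-t)$, which a $2\times 2$ multiplication confirms.

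Second, the sign of the exterior action of the transvection lift is reversed. One has $\rho_S(x_{\delta,w})=1+\iota_\delta\circ(w\wedge -)=1-(w\wedge -)\circ\iota_\delta=\bigwedge(1-w\otimes\delta)$, where $w\otimes\delta$ denotes $y\mapsto\delta(y)w$. This matches the $W$-component $u\mapsto u-\delta(u)w$ in Proposition~\ref{prop:transvection-unit}. With your (correct) choice $(\delta,w)=(-a e_j^*,e_i)$ it is exactly $\bigwedge E_{ij}(a)$. Your $\bigwedge(1+w\otimes\delta)$ is its inverse and would not assemble to $\bigwedge g$.

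Third, $x_{\delta,w}=1+\iota(a)\iota(b)$ is not one of the pair-reflection products covered by the standing convention. For $\delta,w\neq 0$ it is not a product of two vectors at all, because $T_{\delta,w}-1$ has a totally isotropic two-dimensional image. Its unitarity follows directly instead: $\widetilde{x_{\delta,w}}=1+\iota(b)\iota(a)=1-\iota(a)\iota(b)=x_{\delta,w}^{-1}$.

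With these fixes the proof is complete.
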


The preceding theorem constructs an
even unitary Clifford representative with the correct exterior action and
orthogonal factorization. The next theorem serves a different purpose: it gives the
exact membership criterion for the spin image. The forward direction avoids any converse from
vector-preserving Clifford units to products of vector generators; instead, it
writes the required root elements as explicit products of norm-\(-1\)
pair-reflection spin elements.

\begin{theorem}[Exact higher-rank square-determinant Levi image]
\label{thm:square-det-levi-image}
Assume \(W\) is finite-dimensional with $\dim W\ge 3$, and set
\[
  H_W=\operatorname{im}\bigl(\Spin(\Hyp(W))\to\SO(\Hyp(W))\bigr).
\]
For every \(g\in\operatorname{GL}(W)\),
\[
  \Lambda(g)\in H_W
  \quad\Longleftrightarrow\quad
  \det(g)\in (K^\times)^2.
\]
Equivalently, the intersection of the spin image with the split Levi subgroup is
exactly the square-determinant Levi subgroup.
\end{theorem}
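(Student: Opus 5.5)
The proof splits into the two implications; the reverse one is essentially Theorem~\ref{thm:square-det-levi}.

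\emph{Square determinant implies membership.} Suppose \(\det(g)=u^2\). Theorem~\ref{thm:square-det-levi} (applicable since \(\dim W\ge 3\ge 2\)) writes \(\Lambda(g)\) as a product of transported hyperbolic transvections and one chosen-line square scaling. So it suffices to show that each such factor lies in \(H_W\). A hyperbolic transvection \(\Lambda(1+\delta\otimes w)\) with \(\delta(w)=0\) is the image of \(x_{\delta,w}=1+\iota(\delta,0)\iota(0,w)\) from Proposition~\ref{prop:transvection-unit}. To avoid any converse from vector-preserving Clifford units to products of vectors, I would write this element directly as a product of two pair-reflections \(ab\) with \(Q(a)=Q(b)=-1\). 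For basis-adapted \(\delta=e_i^*\), \(w=e_j\) with \(i\neq j\), one can take \(a=(e_k^*,e_k)\cdot\) rescaled, a vector of value \(-1\) in a hyperbolic plane orthogonal to \(\delta,w\); here \(\dim W\ge3\) guarantees a third index \(k\). Then \(b=a+\)(a suitable isotropic correction). This gives \(x_{\delta,w}=ab\) or its negative.

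The chosen-line square scaling \(L_1(u^2)\) is handled the same way. By Proposition~\ref{prop:split-line-square}, its restriction to the line \(K e_1^*\oplus Ke_1\) is the image of a product of two value-\((-1)\) vectors in that plane, namely \((1,-1)\) and \((u^{2},-u^{-2})\) up to sign conventions. These act trivially on the orthogonal complement. Since \(H_W\) is a subgroup, \(\Lambda(g)\in H_W\).

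\emph{Membership implies square determinant.} Let \(s\in\Spin(\Hyp(W))\) map to \(\Lambda(g)\). By Theorem~\ref{thm:levi-projective-exterior}, \(\rho_S(s)=c\bigwedge g\) for some \(c\in K^\times\).

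Next I would use spin unitarity \(s\tilde s=1\) together with the behaviour of Clifford conjugation on \(S\). Conjugation corresponds to a transpose-type anti-involution on \(\End(\bigwedge W)\): the pairing \(\beta(\xi,\eta)=\) top-degree component of \(\tau(\xi)\wedge\eta\), with \(\tau\) the reversion/sign operator. For this pairing, \(\rho_S(\tilde x)\) is the \(\beta\)-adjoint of \(\rho_S(x)\). I would verify this on generators \(\iota(\delta,w)\), where wedge and contraction are mutually adjoint up to sign, and then extend by anti-multiplicativity.

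For \(\bigwedge g\), the adjoint satisfies \((\bigwedge g)^{\dagger}(\bigwedge g)=\det(g)\cdot\mathrm{id}\) on the relevant pieces, because \(\beta\) pairs \(\bigwedge^k\) with \(\bigwedge^{n-k}\) into \(\bigwedge^n W\), on which \(g\) acts by \(\det g\). Hence
\[
  1=\rho_S(\tilde s s)=c^2\det(g),
\]
so \(\det(g)=c^{-2}\) is a square.

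The clean way to carry out this extraction is the one-line projector argument. Evaluate the identity on the vacuum line \(\bigwedge^0 W\) and pair it with the top line \(\bigwedge^n W\), using the projector onto \(1\in\bigwedge^0\) to read off the scalar \(c^2\det g\) as a single matrix coefficient.

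\emph{Main obstacle.} The hardest step is establishing the adjointness of Clifford conjugation with respect to \(\beta\), with the correct signs. I would first try defining \(\beta\) via the antiautomorphism \(\tau\) on \(\bigwedge W\) and checking the two generator cases, wedge by \(w\) and contraction by \(\delta\), directly. If sign mismatches arise, I would switch between reversion and conjugation in the definition of \(\beta\). The sign convention does not matter for the conclusion, as long as the resulting identity reads \(\pm c^2\det g=1\) with the sign fixed by checking the case \(g=1\), \(s=1\).
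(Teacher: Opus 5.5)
\textbf{Gap in the forward implication.} The step \(x_{\delta,w}=\pm ab\) with \(Q(a)=Q(b)=-1\) cannot work. Conjugation by \(ab\) induces \(r_ar_b\), and \(r_ar_b-1\) has image inside \(\operatorname{span}(a,b)\). But \(T_{\delta,w}-1\) sends \((d,u)\) to \((d(w)\delta,-\delta(u)w)\). So for \(\delta,w\neq0\) its image is the plane spanned by \((\delta,0)\) and \((0,w)\), which is totally isotropic because \(\delta(w)=0\). Then \(\operatorname{span}(a,b)\) would be totally isotropic, contradicting \(Q(a)=-1\).

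Equivalently, the bivector part of \(ab\) would have to be proportional to \((\delta,0)\wedge(0,w)\). Your recipe \(b=a+z\) with \(a\perp(\delta,0),(0,w)\) gives \(ab=-1+az\), whose bivector part \(a\wedge z\) can never be \(\pm(\delta,0)\wedge(0,w)\), since \(a\) lies outside that plane. In fact, by Scherk's refinement of Cartan--Dieudonn\'e, a transvection with totally isotropic residual plane needs at least four reflections, so no single pair generator suffices.

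\textbf{How to close it.} There are two options.
\begin{enumerate}
\item Use the paper's stated norm-one even Lipschitz definition literally; then no factorization is needed. By Proposition~\ref{prop:transvection-unit}, \(x_{\delta,w}\) is even and preserves \(V\). Also \(\tilde x_{\delta,w}=1+\iota(0,w)\iota(\delta,0)=1-\iota(\delta,0)\iota(0,w)=x_{\delta,w}^{-1}\). Hence \(x_{\delta,w}\in\Spin\) directly, and this route would not even need \(\dim W\ge3\).
\item If, like the paper, you want membership witnessed by products of value-\((-1)\) vectors, you need a longer word. Proposition~\ref{prop:auxiliary-index-commutator} realizes the elementary transvection as the image of the five pair generators \(P_{ki}(1)P_{kj}(c/2)P_{ki}(-1)P_{kj}(-c/2)P_{ij}(-c)\). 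This is where the auxiliary index \(k\) and \(\dim W\ge3\) actually enter.
\end{enumerate}

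\textbf{A smaller slip.} Your explicit vectors for \(L_1(u^2)\) are off. In the normalization \((-c^{-1}f,cw)\) of Proposition~\ref{prop:chosen-line-square}, the pair \(c=1\), \(c=t\) scales \(w\) by \(t^2\), so you need \(t=u\). Your second vector corresponds to \(c=\pm u^{\pm2}\) and gives scaling by \(u^{\pm4}\). Just cite the proposition with \(t=u\).

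\textbf{The converse is correct and differs from the paper.} Take \(\beta(\xi,\eta)=[\xi^{t}\wedge\eta]_{n}\), where \(\xi^t\) is the reversion. Wedge by \(w\) is \(\beta\)-self-adjoint because \((w\wedge\xi)^t=\xi^t\wedge w\). Contraction is also self-adjoint: contract the vanishing degree-\((n+1)\) form \(\xi^t\wedge\eta\). Hence \(\rho_S(x)^\dagger=\rho_S(\tilde x)\) for every even \(x\), with no residual sign. Combined with \(\beta(\bigwedge g\,\xi,\bigwedge g\,\eta)=\det(g)\,\beta(\xi,\eta)\), this gives \(1=\rho_S(\tilde s s)=c^2\det g\) at once.

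The paper instead uses the forward implication to reduce to \(g=L_{i_0}(d)\), then computes \(s\tilde s\) with the occupation projectors of Proposition~\ref{prop:line-projector-converse}. Your pairing argument is the general form of that computation. It proves the converse for every \(\dim W\) and does not depend on the forward direction.
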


Theorems~\ref{thm:chosen-spinor-package},
\ref{thm:kernel-nonfactor-split-line},
\ref{thm:levi-projective-exterior},
\ref{thm:square-det-levi}, and
\ref{thm:square-det-levi-image} give the chosen-model foundation, the
field-sensitive covering statements, the structural Levi-action bridge, and the
exact square-determinant Levi image theorem. The next section proves these
statements.

\section{Proofs of the main theorems}
\label{sec:highlights}

This section gives the split-model calculations, transport lemmas, and
orthogonal-action formulas used in Section~\ref{sec:results}.

Fix the split hyperbolic form $Q=\Hyp(W)$, choose a basis
$e_1,\dots,e_n$ of $W$ with dual basis $e^1,\dots,e^n$, and write
$e_I$ for the exterior basis vector indexed by a subset
$I\subseteq \{1,\dots,n\}$. Let $L_i(x)=e_i\wedge x$ and
$D_i(x)=\iota_{e^i}(x)$ be wedge and contraction. These operators satisfy the
split Clifford relations
\[
  L_i^2=D_i^2=0,
  \qquad
  L_iD_j+D_jL_i=\delta_{ij}.
\]

\begin{lemma}[Projector factors on the exterior basis]
For a basis vector $e_J$ and an index $i$, one has
\[
  L_iD_i(e_J)=
  \begin{cases}
    e_J,& i\in J,\\
    0,& i\notin J,
  \end{cases}
  \qquad
  D_iL_i(e_J)=
  \begin{cases}
    e_J,& i\notin J,\\
    0,& i\in J.
  \end{cases}
\]
Moreover, for distinct indices these operators commute on the exterior basis.
\end{lemma}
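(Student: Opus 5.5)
The plan is to compute directly on the exterior basis $\{e_J\}$ from the definitions $L_i(x)=e_i\wedge x$ and $D_i(x)=\iota_{e^i}(x)$, with $J$ written in increasing order. The one preliminary fact is the action of each operator on a basis vector. First, $L_i(e_J)=0$ if $i\in J$, and otherwise $L_i(e_J)=\pm e_{J\cup\{i\}}$, the sign being $(-1)^{\#\{j\in J: j<i\}}$. Second, $D_i(e_J)=0$ if $i\notin J$, and otherwise $D_i(e_J)=\pm e_{J\setminus\{i\}}$ with the same sign $(-1)^{\#\{j\in J: j<i\}}$. The second identity holds because contraction is an antiderivation that kills $e_j$ for $j\ne i$ and sends $e_i$ to $1$.

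For $L_iD_i$, take first $i\notin J$; then $D_i e_J=0$, so $L_iD_i e_J=0$. If $i\in J$, then $D_ie_J=\epsilon\, e_{J\setminus\{i\}}$ with $\epsilon=(-1)^{\#\{j\in J:j<i\}}$. Reinserting $i$ into $J\setminus\{i\}$ passes the same set of smaller indices, so $L_i$ contributes the same sign $\epsilon$. The result is $\epsilon^2 e_J=e_J$.

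For $D_iL_i$, the cleanest route is to avoid recomputing signs and use the relation $L_iD_i+D_iL_i=1$, which is the case $i=j$ of the stated Clifford relations. This gives $D_iL_i=1-L_iD_i$, and the second case table follows immediately from the first. As a sanity check, one can verify it directly: for $i\notin J$ the two signs again match, and for $i\in J$ we have $L_ie_J=0$.

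For commutation at distinct indices $i\neq j$, the anticommutation relations give $L_iD_j=-D_jL_i$, $L_iL_j=-L_jL_i$ and $D_iD_j=-D_jD_i$. Then
\[
  (L_iD_i)(L_jD_j)=L_iD_iL_jD_j=L_jD_jL_iD_i,
\]
obtained by moving the pair $L_jD_j$ past $L_i$ and $D_i$. That move uses four anticommutations, so the net sign is $(-1)^4=+1$. Thus $P_i=L_iD_i$ and $P_j=L_jD_j$ commute, and hence so do $1-P_i=D_iL_i$ and $1-P_j$, in all four pairings.

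A more transparent alternative is available from the first part. Each $P_i$ acts diagonally on the basis $\{e_J\}$, with eigenvalue the indicator of $i\in J$, so any two of them commute. This basis-level argument is what the lemma's phrasing suggests, and I would present it as the primary proof.

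The only step needing care is the sign bookkeeping in the formulas for $L_i$ and $D_i$: one has to confirm that the wedge and contraction signs are the same sign $\epsilon$, so that they cancel. Everything after that is formal.
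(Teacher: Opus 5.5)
Your proposal is correct and follows essentially the same route as the paper: the Koszul signs from removing and reinserting $e_i$ cancel, and commutation for distinct indices holds because each operator acts diagonally on the basis $\{e_J\}$ by a membership test. Deriving the $D_iL_i$ table from $L_iD_i+D_iL_i=1$ (where the paper repeats the computation in the ``insert then remove'' order) and your anticommutation-based commutation check are valid extras, not a different strategy.
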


\begin{proof}
If $i\in J$, then $D_i(e_J)$ removes the basis vector $e_i$ from $e_J$ up to
the usual Koszul sign, and wedging again by $e_i$ restores $e_J$ with the same
sign. If $i\notin J$, then $D_i(e_J)=0$. This proves the formula for $L_iD_i$;
the formula for $D_iL_i$ is identical, with ``insert then remove'' replacing
``remove then insert.'' For distinct indices, the operators merely test
membership of different basis vectors, so applying them in either order to any
basis vector either reproduces that vector or kills it. Hence they commute on
the exterior basis, and therefore on all of $\bigwedge W$.
\end{proof}

\begin{theorem}[Matrix units in the split model]
\label{thm:matrix-units-split}
For the split hyperbolic form $Q=\Hyp(W)$, define for each subset
$I\subseteq \{1,\dots,n\}$
\[
  P_I=\prod_{i\in I} L_iD_i \prod_{j\notin I} D_jL_j .
\]
In this formula, and in the one below, all products are taken from left to
right in increasing index order. Then $P_I(e_J)=0$ for $J\neq I$ and
$P_I(e_I)=e_I$. For subsets $I,J$, let
\[
  T_{I,J}=
    \varepsilon_{I,J}
    \Bigl(\prod_{i\in I\setminus J} L_i\Bigr)
    \Bigl(\prod_{j\in J\setminus I} D_j\Bigr)
    P_J,
\]
where $\varepsilon_{I,J}\in\{\pm 1\}$ is the unique sign making
$T_{I,J}(e_J)=e_I$. Then
\[
  T_{I,J}(e_{J'})=\delta_{J,J'} e_I.
\]
Consequently the image of the split Clifford action contains every matrix unit
$E_{I,J}$ of $\End(\bigwedge W)$. Hence the split Clifford action is
surjective, and since both source and target have dimension $2^{2n}$, one obtains
\[
  \Cl(Q)\simeq \End(\bigwedge W).
\]
\end{theorem}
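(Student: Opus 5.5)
The proof is a direct computation on the exterior basis $\{e_J\}$, using the preceding lemma, followed by a dimension count.

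\textbf{The projectors $P_I$.} By the lemma, every factor $L_iD_i$ and $D_jL_j$ acts diagonally on the basis $e_J$. Each factor either fixes $e_J$ or kills it, so the ordering convention is harmless here. The factor $L_iD_i$ with $i\in I$ survives on $e_J$ exactly when $i\in J$. The factor $D_jL_j$ with $j\notin I$ survives exactly when $j\notin J$. Hence $P_I(e_J)$ is nonzero only when $J\cap\{1,\dots,n\}$ agrees with $I$ at every index, that is, when $J=I$, and then $P_I(e_I)=e_I$.

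\textbf{The operators $T_{I,J}$.} Apply the definition to $e_{J'}$.
\begin{itemize}
\item If $J'\neq J$, then $P_J(e_{J'})=0$, so $T_{I,J}(e_{J'})=0$.
\item If $J'=J$, then $P_J(e_J)=e_J$. The contractions $D_j$ for $j\in J\setminus I$ each remove an index present in $J$. They therefore produce $\pm e_{J\cap I}$, which is nonzero because the indices are distinct and each lies in the current support at the moment it is removed.
\item The wedges $L_i$ for $i\in I\setminus J$ then insert indices absent from $J\cap I$. This produces $\pm e_{(J\cap I)\cup(I\setminus J)}=\pm e_I$, again nonzero since nothing is inserted twice.
\end{itemize}
So $T_{I,J}(e_J)$ without the sign is $\pm e_I$. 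This is the only point needing care: a nonzero result is what makes $\varepsilon_{I,J}$ well defined. Choosing $\varepsilon_{I,J}$ to cancel the Koszul sign gives $T_{I,J}(e_{J'})=\delta_{J,J'}e_I$, i.e.\ $T_{I,J}=E_{I,J}$.

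\textbf{Surjectivity.} Each $L_i$ and $D_i$ is the image of a generator $\iota(0,e_i)$ or $\iota(e^i,0)$ under the split Clifford action, which is defined via the universal property from the relations displayed above. Every $T_{I,J}$ is a product of such images times a sign, so it lies in the image of the algebra map $\Cl(Q)\to\End(\bigwedge W)$. The matrix units span $\End(\bigwedge W)$, so the map is surjective.

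\textbf{Dimension count.} Since $\dim(W^*\oplus W)=2n$, the standard PBW/basis result gives $\dim\Cl(Q)=2^{2n}$. This is available in Mathlib, and classically $\Cl(Q)$ is spanned by ordered monomials and is isomorphic as a graded vector space to $\bigwedge V$. Also $\dim\End(\bigwedge W)=(2^n)^2=2^{2n}$. A surjective linear map between spaces of equal finite dimension is bijective, so $\Cl(Q)\simeq\End(\bigwedge W)$.

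The step I expect to be the main obstacle is the dimension input $\dim\Cl(Q)=2^{2n}$, rather than the combinatorics. If the basis theorem were not assumed, I would instead show that the $2^{2n}$ ordered monomials span $\Cl(Q)$, which gives $\le 2^{2n}$, and combine this with surjectivity onto a space of dimension $2^{2n}$ to get equality.
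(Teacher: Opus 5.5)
Your proposal is correct and follows essentially the same route as the paper: the lemma makes $P_I$ the projector onto $Ke_I$, the contraction and wedge factors carry $e_J$ to $\pm e_I$ with $\varepsilon_{I,J}$ absorbing the Koszul sign, and surjectivity plus the count $\dim\Cl(Q)=\dim\End(\bigwedge W)=2^{2n}$ gives the isomorphism. Your fallback is an optional extra the paper does not use: bounding $\dim\Cl(Q)\le 2^{2n}$ by spanning ordered monomials and then invoking surjectivity makes the argument independent of the Clifford basis theorem.
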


\begin{proof}
The preceding lemma shows that each factor in $P_I$ either fixes or kills a
basis vector according to whether the corresponding index belongs to that basis
vector. Since the factors commute on the exterior basis, the product $P_I$
annihilates every $e_J$ with $J\neq I$ and fixes $e_I$. Thus $P_I$ projects
onto the line $Ke_I$.

Now fix $J$. The operator $P_J$ kills every basis vector except $e_J$. On the
vector $e_J$, the contraction factors indexed by $J\setminus I$ are applied in
increasing order and remove exactly the basis elements that must disappear; the
result is the basis vector $e_{J\cap I}$ up to the corresponding Koszul sign.
The wedge factors indexed by $I\setminus J$, again in increasing order, insert
exactly the missing basis elements, producing $e_I$ up to a second Koszul sign.
Because $(J\setminus I)\cap(I\setminus J)=\varnothing$, none of these steps
kills the intermediate vector. With the chosen order, the total sign is the
single scalar $\varepsilon_{I,J}$ built into the definition of $T_{I,J}$.
Hence $T_{I,J}(e_{J'})=0$ for $J'\neq J$ and $T_{I,J}(e_J)=e_I$, so
$T_{I,J}=E_{I,J}$.

The image of the split Clifford action therefore contains all matrix units of
$\End(\bigwedge W)$ and is surjective. Finally,
$\dim \Cl(Q)=2^{\dim(W^*\oplus W)}=2^{2n}$ and
$\dim \End(\bigwedge W)=(2^n)^2=2^{2n}$, so the surjective map is an
isomorphism.
\end{proof}

\begin{corollary}[Even part and half-spin consequences]
\label{cor:even-half-spin}
For the same split hyperbolic form $Q=\Hyp(W)$, the even Clifford image
preserves the parity decomposition
$\bigwedge W=\wedgeEven\oplus\wedgeOdd$. On each parity block it contains all
matrix units between basis vectors of the same parity. If $\dim W>0$, then
\[
  \Cl^+(Q)\simeq \End(\wedgeEven)\times \End(\wedgeOdd).
\]
If $\dim W=0$, then $\Cl^+(Q)=K=\End(\wedgeEven)$ and $\wedgeOdd=0$. Moreover,
$\wedgeEven$ is simple as a $\Cl^+(Q)$-module, and if $\dim W>0$ then
$\wedgeOdd$ is simple as well. When $\dim W>0$ the two parity pieces are
inequivalent.
\end{corollary}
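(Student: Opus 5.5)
The corollary follows from Theorem~\ref{thm:matrix-units-split} by a parity bookkeeping argument.

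\textbf{Parity is preserved.} Each generator of the split Clifford action is either $L_i$ or $D_i$. Both change exterior degree by exactly $\pm 1$, so both are odd operators on $\bigwedge W=\wedgeEven\oplus\wedgeOdd$. An even Clifford element is a sum of products of an even number of generators. Hence it acts by an operator of even parity, that is, one preserving both $\wedgeEven$ and $\wedgeOdd$.

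\textbf{Same-parity matrix units lie in the even image.} Take subsets $I,J$ with $|I|\equiv|J|\pmod 2$. The operator $T_{I,J}$ is a product of:
\begin{itemize}
\item $|I\setminus J|$ wedges,
\item $|J\setminus I|$ contractions,
\item the projector $P_J$, which uses $2n$ generators.
\end{itemize}
The total count is $|I\setminus J|+|J\setminus I|+2n\equiv |I|+|J|\equiv 0 \pmod 2$. So $T_{I,J}=E_{I,J}$ is the image of an even Clifford monomial. Therefore the even image contains all matrix units inside each parity block, and so equals $\End(\wedgeEven)\times\End(\wedgeOdd)$ embedded block-diagonally.

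\textbf{Injectivity by dimension count.} For $n>0$, each parity block has dimension $2^{n-1}$. The target then has dimension $2\cdot 2^{2n-2}=2^{2n-1}$, which equals $\dim\Cl^+(Q)=\tfrac12\dim\Cl(Q)$. Alternatively, injectivity follows directly from the injectivity of the full action in Theorem~\ref{thm:matrix-units-split}. Either way, the surjection from $\Cl^+(Q)$ onto $\End(\wedgeEven)\times\End(\wedgeOdd)$ is an isomorphism.

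\textbf{The case $n=0$.} Here $\Cl(Q)=K=\Cl^+(Q)$, $\bigwedge W=K$ lies entirely in even degree, and $\wedgeOdd=0$.

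\textbf{Simplicity and inequivalence.} Under the isomorphism, $\wedgeEven$ is the standard module of the full matrix algebra $\End(\wedgeEven)$, which is simple, and the same holds for $\wedgeOdd$ when $n>0$. To separate the two modules, use the central idempotent $P^{+}$ of $\Cl^+(Q)$ corresponding to $(1,0)$. It acts as the identity on $\wedgeEven$ and as zero on $\wedgeOdd$. Both modules are nonzero when $n>0$, so no isomorphism can intertwine them.

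The main point requiring care is the second step: the parity of $T_{I,J}$ must be counted correctly, including the $2n$ generators in $P_J$. If one instead used the preimage of the block-diagonal matrix units without checking parity, the isomorphism could fail.
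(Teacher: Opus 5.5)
Your proposal is correct and follows essentially the same route as the paper: parity preservation because each $L_i, D_i$ shifts degree by $\pm1$, evenness of $T_{I,J}$ when $|I|\equiv|J|\pmod 2$, surjectivity onto the block-diagonal algebra, a dimension count for injectivity, and simplicity and inequivalence read off from the two-factor decomposition. Your central idempotent $(1,0)$ plays the role of the paper's factor $0\times\End(\wedgeOdd)$. The explicit count of $2n$ generators in $P_J$ and the alternative injectivity argument via the full isomorphism of Theorem~\ref{thm:matrix-units-split} are only slightly more detailed versions of the paper's steps.
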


\begin{proof}
Each $L_i$ and $D_i$ changes degree by $\pm 1$, so even words preserve parity.
If $|I|\equiv |J|\pmod 2$, then the operator $T_{I,J}$ above contains an even
number of wedge/contraction steps and belongs to the even Clifford image.
Restricting to basis vectors of fixed parity therefore yields every block matrix
unit on $\wedgeEven$ and on $\wedgeOdd$. If $\dim W>0$, this gives a surjective
homomorphism
\[
  \Cl^+(Q)\twoheadrightarrow \End(\wedgeEven)\times \End(\wedgeOdd).
\]
Now
\[
  \begin{aligned}
  \dim \Cl^+(Q) &= 2^{2n-1}, \\
  \dim \End(\wedgeEven)+\dim \End(\wedgeOdd)
    &= 2(2^{n-1})^2 = 2^{2n-1}.
  \end{aligned}
\]
so the surjection is an isomorphism. If $\dim W=0$, then $\wedgeEven=K$,
$\wedgeOdd=0$, and the even Clifford algebra is \(K\).

The same blockwise matrix units show that any nonzero vector in $\wedgeEven$
generates the whole even half, and similarly for $\wedgeOdd$ when $n>0$.
When $n>0$, any $\Cl^+(Q)$-linear map $\wedgeEven\to\wedgeOdd$ is annihilated by
$0\times \End(\wedgeOdd)$ on the source and acted on faithfully by that factor
on the target, so it must vanish.
\end{proof}

\begin{proposition}[Transport to hyperbolic presentations and split rank]
\label{prop:transport-split}
If $e:Q \simeq \Hyp(W)$ is an explicit hyperbolic presentation, then
conjugating the split-model operators above by the linear identification
induced from $e$ transports the matrix-unit theorem and its parity corollary to
the Clifford action attached to $Q$. If $Q$ is finite-dimensional,
nondegenerate, and of split rank (equivalently maximal Witt index), a Witt decomposition provides
such an $e$, so the same proof gives the split-rank statements without keeping
the isometry as part of the final formulation.
\end{proposition}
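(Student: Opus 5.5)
The plan is to make precise what ``transport'' means and then to check that each ingredient of Theorem~\ref{thm:matrix-units-split} and Corollary~\ref{cor:even-half-spin} is invariant under conjugation by an isometry. An isometry \(e:(V,Q)\to(W^*\oplus W,\Hyp(W))\) induces, by the universal property of Clifford algebras, an algebra isomorphism \(\Cl(e):\Cl(Q)\to\Cl(\Hyp(W))\). Its inverse is \(\Cl(e^{-1})\), and it respects the \(\mathbb{Z}/2\)-grading because it sends generators \(\iota_Q(v)\) to generators \(\iota(e(v))\). The Clifford action of \(Q\) on \(S=\bigwedge W\) is defined as the composite \(\rho_Q=\rho_{\Hyp(W)}\circ\Cl(e)\), so a vector \(v\) acts by wedge with the \(W\)-component of \(e(v)\) plus contraction by its \(W^*\)-component, with the normalization fixed in the split model.

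\textbf{Algebra statements.} Surjectivity of \(\rho_Q\) follows immediately: \(\Cl(e)\) is bijective and \(\rho_{\Hyp(W)}\) is surjective by Theorem~\ref{thm:matrix-units-split]}. Concretely, the matrix units of \(\End(\bigwedge W)\) are realized by \(\Cl(e)^{-1}(T_{I,J})\). Dimensions are preserved, so \(\Cl(Q)\simeq\End(S)\). Since \(\Cl(e)\) maps \(\Cl^0(Q)\) onto \(\Cl^0(\Hyp(W))\), the parity corollary transports verbatim: the even image preserves \(S^\pm\), contains the blockwise matrix units, and gives the product decomposition. Simplicity and inequivalence of the halves are statements about the image algebra acting on \(S^\pm\), and that image is unchanged, so these properties transport as well.

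\textbf{Split-rank statement.} Let \(Q\) be nondegenerate with \(\dim V=2m\) and \(m=\mathrm{wittIndex}(Q)\). Choose a maximal totally isotropic \(W\subseteq V\) of dimension \(m\) together with a complementary totally isotropic \(W'\). Such a \(W'\) exists by Witt's theorem, or directly by the usual hyperbolic-pair construction, using \(2\in K^\times\) to pass between the quadratic form and its polar form. The polar bilinear form \(B\) then gives a perfect pairing \(W'\to W^*\). Define \(e(w'+w)=(B(w',\cdot)|_W,\,w)\), with the factor of \(1/2\) adjusted to match the convention for \(\Hyp(W)\). This map is an isometry because \(W\) and \(W'\) are isotropic and the cross term is exactly the pairing. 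Feeding this \(e\) into the first part yields the split-rank formulation.

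\textbf{Expected obstacle.} I expect the main difficulty to be bookkeeping rather than a conceptual gap. One issue is the normalization of \(\Hyp\) and of \(B\): the factor \(2\) in \(Q(v)=\tfrac12 B(v,v)\) versus \(Q(\xi,w)=\xi(w)\), which makes the Clifford relations come out as \(L_iD_j+D_jL_i=\delta_{ij}\) rather than \(2\delta_{ij}\). The other is checking that \(\Cl(e)\) respects the grading. Both are handled by computing on generators.
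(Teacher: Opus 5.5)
Your proposal is correct and follows the paper's proof. Both define the transported action as \(\rho_{\mathrm{hyp}}\circ\phi_e\), where \(\phi_e\) is the Clifford-algebra isomorphism induced by \(e\); both carry the matrix-unit theorem and its parity corollary through that grading-preserving isomorphism; and both obtain the split-rank case from a Witt decomposition \(V=W'\oplus W\), using the polar pairing to identify \(W'\) with \(W^*\). Your explicit checks of grading preservation and of the normalization of the isometry spell out steps that the paper leaves implicit.
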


\begin{proof}
Let $\rho_{\mathrm{hyp}}$ denote the split-model Clifford action on
$\bigwedge W$, and let
\[
  \phi_e:\Cl(Q)\xrightarrow{\ \simeq\ }\Cl(\Hyp(W))
\]
be the algebra isomorphism induced by the hyperbolic presentation $e$.
Transporting the action simply means defining
\[
  \rho_e(a)=\rho_{\mathrm{hyp}}(\phi_e(a))
  \qquad (a\in \Cl(Q)).
\]
For a vector $v\in V$, this says
\(\rho_e(\iota_Q(v)) = (\rho_{\mathrm{hyp}}\circ \iota_{\Hyp}\circ e)(v)\),
so the transported action is obtained by writing the split-model
wedge/contraction formulas in the coordinates supplied by $e$.
Because $\phi_e$ is an algebra isomorphism, the transported wedge/contraction
operators satisfy the same Clifford relations as the split ones. The
projectors $P_I$ and transfer operators $T_{I,J}$ are algebraic expressions in
those operators, so the entire matrix-unit construction survives unchanged
under transport. This gives the explicit-presentation version of both
Theorem~\ref{thm:matrix-units-split} and
Corollary~\ref{cor:even-half-spin}.

Now suppose $Q$ is finite-dimensional, nondegenerate, and of split rank
(equivalently maximal Witt index). Choose a maximal totally isotropic subspace $W\subset V$ and an
isotropic complement $W'$. Witt decomposition gives
$V=W'\oplus W$, and the bilinear pairing between $W'$ and $W$ identifies
$W'$ with $W^*$, producing a hyperbolic presentation
$Q\simeq \Hyp(W)$. Applying the preceding transport to that presentation
yields the split-rank statements. The final theorems depend only on the
existence of such a Witt decomposition, not on retaining a particular choice in
the statement.
\end{proof}

\paragraph{The spinor model versus the ambient regular module.}
The ambient regular module on $V$ has dimension
$2^{\dim V}$, whereas the chosen spinor model in split rank has the classical
dimension $2^{\dim V/2}$. The half-spin theorems therefore cannot literally be
statements about ambient submodules of the regular representation. Instead,
bridge results identify the canonical chosen-model halves with the even and odd
parts of the exterior model built on the maximal isotropic space:
$S^{\pm} = \bigwedge^{\mathrm{even}/\mathrm{odd}} W$.

\paragraph{Covering map and image calculations.}
The covering map $\Spin(V,Q) \to \SO(V,Q)$ is not treated in full
generality here. The results below establish the exact split-line image and the
exact square-determinant split-Levi image in finite split rank at least three.
All of these statements live under the standing \(2\in K^\times\) hypothesis. In the
split-rank setting treated here, if an
element lies in the kernel of the spin-to-isometry representation, then its
conjugation action is trivial on the Clifford generators, hence on the whole
Clifford algebra. The matrix model then forces such an element to be scalar,
and the spin norm reduces the scalar possibilities to $\pm1$.

\begin{proposition}[Kernel calculation from the matrix model]
\label{prop:kernel-from-matrix}
Assume $Q$ is finite-dimensional, nondegenerate, and hyperbolic. If
$x\in \Spin(Q)$ acts trivially on $V$ under the spin-to-isometry map, then
$x=\pm1$.
\end{proposition}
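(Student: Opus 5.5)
The argument follows the sketch given just before the statement: a kernel element is central in the Clifford algebra, the matrix model forces it to be a scalar, and the spin norm pins the scalar down to $\pm1$.

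\emph{Step 1: reduce to the split model.} By Proposition~\ref{prop:transport-split}, fix a hyperbolic presentation $e:Q\simeq\Hyp(W)$ with $\dim W=n$. The induced algebra isomorphism $\phi_e$ sends $\iota_Q(v)$ to $\iota_{\Hyp}(e(v))$, so it carries $\Spin(Q)$ onto $\Spin(\Hyp(W))$. It also intertwines the two conjugation actions on vectors, so it respects the kernels of the spin-to-isometry maps. It therefore suffices to treat $Q=\Hyp(W)$.

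\emph{Step 2: a kernel element is central.} Let $x\in\Spin(Q)$ act trivially on $V$, so that $x v x^{-1}=v$ for every $v\in V$. Then $x$ commutes with every vector. The Clifford algebra is generated as a $K$-algebra by $V$, so $x$ commutes with all of $\Cl(Q)$, i.e.\ $x\in Z(\Cl(Q))$.

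\emph{Step 3: the centre is $K$.} By Theorem~\ref{thm:matrix-units-split}, the split action gives an isomorphism $\rho_S:\Cl(Q)\xrightarrow{\simeq}\End(\bigwedge W)$. The centre of a full matrix algebra consists of the scalars. To check this directly, note that an operator commuting with every matrix unit $E_{I,J}$ maps each basis vector $e_J$ to a multiple $\lambda_J e_J$. Commuting with $E_{I,J}$ then gives $\lambda_I=\lambda_J$ for all $I,J$. Since $\rho_S$ is injective, $x=\lambda\cdot 1$ for some $\lambda\in K$, and $\lambda\in K^\times$ because $x$ is a unit. This identification of the centre is the only step that really uses the matrix model. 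It is also the point to watch in the degenerate case $n=0$, but there $\Cl(Q)=K$ and the conclusion holds trivially.

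\emph{Step 4: the norm condition.} Clifford conjugation fixes scalars, so $\tilde x=\lambda$. The membership condition $x\tilde x=1$ then reads $\lambda^2=1$. Because $K$ is a field, this forces $\lambda=\pm1$, hence $x=\pm1$.

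Finally, both $\pm1$ lie in $\Spin(Q)$ and act trivially on $V$, so the kernel is exactly $\{\pm1\}$, as used in Theorem~\ref{thm:kernel-nonfactor-split-line}.
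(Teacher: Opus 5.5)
Your proposal is correct and follows essentially the same argument as the paper: centrality from commuting with the generating vectors, identification of the centre with $K$ via the matrix model $\Cl(Q)\simeq\End(\bigwedge W)$, and then $\lambda^2=1$ from the spin norm. Your explicit matrix-unit check of the centre and your up-front transport step make explicit what the paper leaves implicit.
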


\begin{proof}
Triviality of the orthogonal action means $xvx^{-1}=v$ for every $v\in V$, so
$xv=vx$. Since $V$ generates $\Cl(Q)$, the element $x$ is central in
$\Cl(Q)$. Under the matrix-model isomorphism
$\Cl(Q)\simeq \End(\bigwedge W)$, central elements correspond to the center of a
full matrix algebra, hence to scalars. Thus $x=\lambda$ for some
$\lambda\in K^\times$. The defining spin norm relation then gives
$1=x\tilde x=\lambda^2$, so $\lambda=\pm1$. Transporting this argument through
the split-Witt reformulation gives the positive split-rank kernel theorem used in the
main results.
\end{proof}

\begin{proposition}[Positive split-rank non-factorization]
\label{prop:nonfactorization}
Assume $Q$ is finite-dimensional, nondegenerate, and of positive split rank,
and let $S=\bigwedge W$ be the exterior-model spinor module attached to a
chosen Witt decomposition of $Q$. Then the spin representation
\[
  \Spin(V,Q)\to \operatorname{GL}(S)
\]
does not factor through $\SO(V,Q)$. The same is true for the induced
representations on the nonzero half-spin modules $S^+$ and $S^-$.
\end{proposition}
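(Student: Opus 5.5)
The plan is to argue by contradiction, using the kernel computation of Proposition~\ref{prop:kernel-from-matrix}. Suppose there were a group homomorphism $\rho:\SO(V,Q)\to\operatorname{GL}(S)$ with $\rho\circ\pi=\rho_S|_{\Spin}$, where $\pi:\Spin(V,Q)\to\SO(V,Q)$ is the spin-to-isometry map. Since $Q$ has positive split rank, the element $-1$ lies in $\Spin(V,Q)$: it is even, it is a unit, it preserves $V$ by conjugation, and $(-1)\widetilde{(-1)}=1$. Conjugation by $-1$ is trivial, so $\pi(-1)=\mathrm{id}_V$. Factorization would therefore force
\[
  \rho_S(-1)=\rho(\pi(-1))=\rho(\mathrm{id}_V)=\mathrm{id}_S .
\]

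Next I compute $\rho_S(-1)$ directly. The spin action on $S$ is the restriction of the Clifford action $\Cl(Q)\to\End(S)$. That action is an algebra homomorphism and is unital, since it is transported from the split model, where the identity of $\Cl$ acts as the identity of $\bigwedge W$. Hence $\rho_S(-1)=-\mathrm{id}_S$. Because $W$ has positive dimension, $S$ is nonzero, with $\dim S=2^n\ge 2$ by Theorem~\ref{thm:chosen-spinor-package}. Since $2\in K^\times$, we have $-1\neq 1$ in $K$, so $-\mathrm{id}_S\neq\mathrm{id}_S$. This contradicts the previous display, so no such $\rho$ exists.

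For the half-spin pieces the argument is the same. The element $-1$ is even, so it preserves the parity decomposition of Corollary~\ref{cor:even-half-spin} and acts on each of $S^\pm$ as $-\mathrm{id}$. In positive split rank both $S^+$ and $S^-$ are nonzero, each of dimension $2^{n-1}\ge1$. Any factorization through $\SO$ would again force $-\mathrm{id}=\mathrm{id}$ on a nonzero space, which is impossible in characteristic $\neq2$.

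The only point needing care is the transport step. Once the Witt decomposition is fixed, I must check that the transported action $\rho_e=\rho_{\mathrm{hyp}}\circ\phi_e$ of Proposition~\ref{prop:transport-split} sends $-1\in\Cl(Q)$ to $-\mathrm{id}_S$, and that $-1$ really lies in the paper's norm-one even Lipschitz model of $\Spin$. Both follow because $\phi_e$ is a unital algebra isomorphism and because $-1$ is a scalar. I do not expect any genuine obstacle here; the nonvanishing of $S^\pm$ in positive rank is the one hypothesis that is genuinely used.
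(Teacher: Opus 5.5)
Your argument is correct and is essentially the paper's: $-1$ lies in $\Spin(V,Q)$, conjugates trivially and so maps to $\mathrm{id}_V$, yet acts as $-\mathrm{id}$ on the nonzero spaces $S$, $S^+\ni 1$ and $S^-\ni w$. The only difference is how you get $-1\in\Spin(V,Q)$. You check it directly from the norm-one even Lipschitz definition, so positive rank is used only for $S^-\neq 0$, and you never need the full kernel computation. The paper instead writes $-1=u^2$ with $u=(-f,w)$ and $Q(u)=-1$, exhibiting $-1$ as a product of two vectors of $Q$-value $-1$; this keeps membership valid in the product-of-vectors model used by the Lean companion, and it is where the paper uses positive split rank.
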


\begin{proof}
Choose a nonzero $w\in W$ and a dual vector $f\in W^*$ with $f(w)=1$. In the
transported hyperbolic model, the vector
\[
  u=(-f,w)\in W^*\oplus W
\]
satisfies
\[
  Q(u)=(-f)(w)=-1,
\]
hence $u^2=-1$. Therefore
\[
  -1=u^2
\]
is a product of two \(Q=-1\) vector generators, so $-1\in \Spin(V,Q)$. Being
scalar, it acts trivially on $V$ by conjugation, so its image in $\SO(V,Q)$ is
the identity.
On the spinor module $S=\bigwedge W$, however, the Clifford action of the
scalar $-1$ is $-\mathrm{id}_S$, which is nontrivial because
$(-1)\cdot 1=-1\neq 1$ in $S$. Since scalar multiplication preserves parity,
the same operator restricts to $-\mathrm{id}$ on each half-spin piece. The
positive half contains $1$, and the negative half is nonzero because
$w\in W\subset \bigwedge^1 W$. Thus the action of $-1$ is nontrivial on $S$,
$S^+$, and $S^-$. Therefore none of these representations can factor through
$\SO(V,Q)$.
\end{proof}

\begin{proposition}[Positive split-rank projective descent]
\label{prop:projective-descent}
Assume $Q$ is finite-dimensional, nondegenerate, and of positive split rank,
and let $S=\bigwedge W$ be the exterior-model spinor module attached to a
chosen Witt decomposition of $Q$. Then the induced action of $\Spin(V,Q)$ on
the projective space $\mathbb{P}(S)$ of one-dimensional subspaces depends only
on the image of $\Spin(V,Q)\to\SO(V,Q)$ and therefore defines an action of
that image subgroup. The same is true for the projective spaces of the nonzero
half-spin modules $S^+$ and $S^-$.
\end{proposition}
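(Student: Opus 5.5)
The argument rests on the kernel calculation, Proposition~\ref{prop:kernel-from-matrix}. Let $\pi:\Spin(V,Q)\to\SO(V,Q)$ be the spin-to-isometry map, and write $\rho_S:\Spin(V,Q)\to\operatorname{GL}(S)$ for the spinor action, restricted to $S$ or to a nonzero half $S^\pm$. Since the projective action of $\Spin(V,Q)$ on $\mathbb{P}(S)$ is $\bar\rho_S(x)[v]=[\rho_S(x)v]$, it suffices to show that $\bar\rho_S(x)$ depends only on $\pi(x)$. The induced action of the image subgroup is then $\pi(x)\cdot[v]:=[\rho_S(x)v]$.

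The first step is well-definedness. Suppose $\pi(x)=\pi(y)$. Then $y^{-1}x$ lies in $\ker\pi$, and Proposition~\ref{prop:kernel-from-matrix}, transported to the chosen Witt decomposition by Proposition~\ref{prop:transport-split}, gives $y^{-1}x=\pm1$. Hence $x=\pm y$ in $\Spin(V,Q)\subset\Cl(Q)$, and so $\rho_S(x)=\pm\rho_S(y)$. The Clifford action is linear, so the scalar $-1\in\Cl(Q)$ acts as $-\mathrm{id}_S$. A scalar multiple of an operator induces the same map on lines, so $[\rho_S(x)v]=[\rho_S(y)v]$ for every nonzero $v$.

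The second step is to check that we have an action. Since $\pi$ and $\rho_S$ are homomorphisms, for $h=\pi(x)$ and $h'=\pi(x')$ we have $hh'\cdot[v]=[\rho_S(xx')v]=h\cdot(h'\cdot[v])$, and the identity of the image acts trivially.

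For the half-spin pieces, $\Spin(V,Q)\subset\Cl^0(Q)$, and Corollary~\ref{cor:even-half-spin} shows that even elements preserve $S^+$ and $S^-$. The restricted actions are therefore defined, and $-1$ still acts as $-\mathrm{id}$ on each piece, so the same argument applies to $\mathbb{P}(S^\pm)$. Positive split rank is needed to make $S^-$ nonzero. It is also the standing setting for the kernel statement.

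The only substantive point is the kernel identification $\ker\pi=\{\pm1\}$. It has to be invoked for a general $Q$ through the transport $\phi_e$. The subtlety is that $\phi_e$ carries $\Spin(Q)$ to $\Spin(\Hyp(W))$ compatibly with the two maps to $\SO$. This holds because $\phi_e$ is induced by an isometry, so it commutes with conjugation on vectors and with Clifford conjugation.
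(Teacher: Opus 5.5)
Your proposal is correct and follows essentially the same route as the paper's proof. The kernel calculation gives $y^{-1}x=\pm1$ whenever $x$ and $y$ have the same image, and $\pm1$ acts by scalars on $S$ and on each parity piece (which even elements preserve), so it fixes every line. Your explicit check of the action axioms and your remark on transporting the kernel computation along $\phi_e$ are harmless elaborations of the argument the paper gives.
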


\begin{proof}
Suppose $x,y\in \Spin(V,Q)$ have the same image in $\SO(V,Q)$. Then
$y^{-1}x$ acts trivially on $V$, so by the kernel calculation above one has
$y^{-1}x=\pm1$. On $S$, and likewise on each half-spin piece, the scalars
$\pm1$ act by scalar multiplication and therefore fix every one-dimensional
subspace. Thus $x$ and $y$ induce the same transformation on
$\mathbb{P}(S)$, $\mathbb{P}(S^+)$, and $\mathbb{P}(S^-)$. Hence these
projective actions depend only on the image in $\SO(V,Q)$ and therefore define
actions of the image subgroup of $\Spin(V,Q)\to\SO(V,Q)$.
\end{proof}

\begin{proposition}[Explicit hyperbolic transvection lift]
\label{prop:transvection-unit}
Let $Q=\Hyp(W)$. If $\delta\in W^*$ and $w\in W$ satisfy $\delta(w)=0$,
set \(a=(\delta,0)\), \(b=(0,w)\), and
\(x_{\delta,w}=1+\iota(a)\iota(b)\in \Cl(Q)\).
Then $x_{\delta,w}$ is an even Clifford unit with
\[
  x_{\delta,w}^{-1}=1-\iota(a)\iota(b),
\]
and for every $(d,u)\in W^*\oplus W$ one has
\[
  x_{\delta,w}\,\iota(d,u)\,x_{\delta,w}^{-1}
  =
  \iota\bigl(d+d(w)\delta,\ u-\delta(u)w\bigr).
\]
Equivalently, conjugation by $x_{\delta,w}$ realizes the transported hyperbolic
transvection attached to $(\delta,w)$.
\end{proposition}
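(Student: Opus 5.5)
The plan is a direct computation in $\Cl(Q)$ using only the Clifford relations for the isotropic vectors $a=(\delta,0)$ and $b=(0,w)$. I use the convention $\iota(v)^2=Q(v)$ for $Q=\Hyp(W)$, so $Q(d,u)=d(u)$. Polarizing gives
\[
  \iota(v)\iota(v')+\iota(v')\iota(v)=\beta(v,v'),\qquad \beta\bigl((d,u),(d',u')\bigr)=d(u')+d'(u).
\]
To lighten notation I write $a,b,v$ for $\iota(a),\iota(b),\iota(v)$.

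\textbf{Step 1: the relations among $a$ and $b$.} Because $Q(a)=Q(b)=0$, we have $a^2=b^2=0$. Because $\delta(w)=0$, we have $\beta(a,b)=0$, hence $ab=-ba$. It follows that
\[
  (ab)^2=a(ba)b=-a(ab)b=-a^2b^2=0 .
\]
Therefore $(1+ab)(1-ab)=1-(ab)^2=1$, and the same holds in the other order. So $x_{\delta,w}$ is a unit with the stated inverse. It is even because it is a sum of a scalar and a product of two vectors.

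\textbf{Step 2: the conjugation formula.} For $v=(d,u)$ expand
\[
  x v x^{-1}=(1+ab)\,v\,(1-ab)=v+(ab\,v-v\,ab)-ab\,v\,ab .
\]
Commute $v$ past $b$ and then past $a$, using $bv=\beta(b,v)-vb$ and $av=\beta(a,v)-va$. This gives
\[
  ab\,v=\beta(b,v)\,a-\beta(a,v)\,b+v\,ab .
\]
Hence the commutator term equals $\beta(b,v)a-\beta(a,v)b$. Multiplying this identity on the right by $ab$ gives
\[
  ab\,v\,ab=\beta(b,v)\,a^2b-\beta(a,v)\,bab+v\,(ab)^2 .
\]
Every term here vanishes: $a^2=0$, $(ab)^2=0$, and $bab=-ab^2=0$. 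From the definition of $\beta$ we read off $\beta(b,v)=d(w)$ and $\beta(a,v)=\delta(u)$. So the conjugate is
\[
  \iota(d,u)+d(w)\,\iota(\delta,0)-\delta(u)\,\iota(0,w)=\iota\bigl(d+d(w)\delta,\;u-\delta(u)w\bigr),
\]
which is exactly the transported hyperbolic transvection attached to $(\delta,w)$.

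\textbf{Main obstacle.} The only delicate point is sign and normalization bookkeeping. I must check that the paper's normalization of $\Hyp(W)$ makes the polar form exactly $d(u')+d'(u)$, with no factor of $2$ and no opposite sign. If $Q$ were normalized differently, the displayed formula would pick up a rescaling of $\delta$ or a sign change. I would settle this first, by computing $\iota(a)\iota(b)+\iota(b)\iota(a)$ for a general pair of isotropic vectors, before running the expansion above.
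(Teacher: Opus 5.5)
Your proof is correct and essentially the paper's argument: $n=\iota(a)\iota(b)$ is nilpotent because $a^2=b^2=0$ and $ab=-ba$, and expanding $(1+n)\iota(z)(1-n)$ with the anticommutators $\beta(b,z)=d(w)$ and $\beta(a,z)=\delta(u)$ reduces the commutator term to $d(w)\iota(a)-\delta(u)\iota(b)$; you also check $n\iota(z)n=0$ explicitly, which the paper only asserts. Your normalization worry is settled by the paper's conventions $\iota(v)^2=Q(v)$ and $Q(d,u)=d(u)$ (for instance, it computes $\iota(-f,w)^2=Q(-f,w)=-1$), so the polar form is exactly $d(u')+d'(u)$, as you assumed.
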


\begin{proof}
Write $n=\iota(a)\iota(b)$. Because $Q(a)=Q(b)=0$ and
$\langle a,b\rangle=\delta(w)=0$, one has $n^2=0$. Hence
\[
  (1+n)(1-n)=1=(1-n)(1+n),
\]
so $x_{\delta,w}^{-1}=1-n$. The element $n$ is a product of two vectors, hence
lies in the even Clifford part, and therefore so does $x_{\delta,w}$.

Now let $z=(d,u)$. Using the Clifford anticommutation relation in the
hyperbolic form,
\[
  \iota(b)\iota(z)+\iota(z)\iota(b)=\langle b,z\rangle=d(w),
  \qquad
  \iota(z)\iota(a)+\iota(a)\iota(z)=\langle z,a\rangle=\delta(u),
\]
one obtains
\[
  n\,\iota(z)=d(w)\iota(a)-\iota(a)\iota(z)\iota(b),
  \qquad
  \iota(z)\,n=\delta(u)\iota(b)-\iota(a)\iota(z)\iota(b).
\]
Since $n^2=0$, also $n\,\iota(z)\,n=0$, and therefore
\[
  \begin{aligned}
  x_{\delta,w}\,\iota(z)\,x_{\delta,w}^{-1}
    &= (1+n)\iota(z)(1-n) \\
    &= \iota(z)+\bigl(n\iota(z)-\iota(z)n\bigr) \\
    &= \iota\bigl(d+d(w)\delta,\ u-\delta(u)w\bigr).
  \end{aligned}
\]
This is the displayed transvection formula.
\end{proof}

\paragraph{Unipotent and semisimple directions in the hyperbolic model.}
Proposition~\ref{prop:transvection-unit} gives an explicit Clifford lift of the
unipotent root-subgroup direction in the hyperbolic Levi picture. The next
proposition provides the complementary semisimple direction along a chosen split
line.

\begin{proposition}[Chosen-line square scaling in the hyperbolic model]
\label{prop:chosen-line-square}
Let $Q=\Hyp(W)$. Choose $w\in W$ and $f\in W^*$ with $f(w)=1$. For
$t\in K^\times$, define
\[
  \lambda_t(d,u)=\bigl(d+(t^{-2}-1)d(w)f,\ u+(t^2-1)f(u)w\bigr).
\]
Then $\lambda_t$ scales the line $Kw$ by $t^2$, scales the dual line $Kf$ by
$t^{-2}$, fixes $\ker(f)\subset W$ and $\ker(\operatorname{ev}_w)\subset W^*$,
and lies in the image of $\Spin(Q)\to \SO(Q)$. More precisely, if
\[
  u_t=(-t^{-1}f,tw),
  \qquad
  v=(-f,w),
\]
then $Q(u_t)=Q(v)=-1$ and the spin element $u_t v$ acts on $W^*\oplus W$ as
$\lambda_t$.
\end{proposition}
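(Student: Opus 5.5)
The plan is a direct computation in the split model. It has three parts: the elementary properties of \(\lambda_t\), the \(Q\)-values of \(u_t\) and \(v\) together with the fact that \(u_tv\) is a spin element, and the conjugation action of \(u_tv\), which I will reduce to the reflection formula.

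\emph{Properties of \(\lambda_t\).} These are read off from the defining formula. For \(u\in\ker(f)\) the correction term \((t^2-1)f(u)w\) vanishes. For \(u=w\) it equals \((t^2-1)w\), so \(w\mapsto t^2w\). The dual statements \(f\mapsto t^{-2}f\) and \(\ker(\operatorname{ev}_w)\) fixed follow in the same way. Since \(W=Kw\oplus\ker f\) and \(W^*=Kf\oplus\ker(\operatorname{ev}_w)\), this describes \(\lambda_t\) completely.

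\emph{Spin membership.} With \(Q(d,u)=d(u)\) we get \(Q(u_t)=(-t^{-1}f)(tw)=-1\) and \(Q(v)=-1\). Hence \(u_t^2=v^2=-1\), so both vectors are invertible with \(u_t^{-1}=-u_t\) and \(v^{-1}=-v\). Clifford conjugation gives \(\widetilde{u_tv}=vu_t\). Therefore
\[
(u_tv)(vu_t)=u_t(-1)u_t=1,
\]
and symmetrically \((vu_t)(u_tv)=1\). So \(u_tv\) is an even norm-one unit, and it preserves \(V\) by the reflection formula. This is exactly the pair-reflection product mentioned in the standing conventions, so \(u_tv\in\Spin(Q)\).

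\emph{Reflection formula.} I will use the polar convention already in force in Proposition~\ref{prop:transvection-unit}, namely \(az+za=\langle a,z\rangle\), where \(\langle (d,u),(d',u')\rangle=d(u')+d'(u)\). For a vector \(a\) with \(Q(a)=-1\) this gives
\[
a z a^{-1}=-(az)a=-(\langle a,z\rangle-za)a=-\langle a,z\rangle a-z,
\]
so \(a z a^{-1}=-r_a(z)\) with \(r_a(z)=z+\langle a,z\rangle a\). Applying this twice, the two signs cancel:
\[
(u_tv)\,z\,(u_tv)^{-1}=r_{u_t}\bigl(r_v(z)\bigr).
\]

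\emph{Action of \(u_tv\).} Both \(u_t\) and \(v\) lie in the hyperbolic plane \(Kf\oplus Kw\). The subspace \(\ker(\operatorname{ev}_w)\oplus\ker(f)\) is orthogonal to this plane, so it is fixed by both reflections and hence by \(r_{u_t}r_v\), as required. It remains to check the two basis vectors of the plane.
\begin{itemize}
\item Since \(\langle v,(0,w)\rangle=-1\), we get \(r_v(0,w)=(0,w)-(-f,w)=(f,0)\). Then \(\langle u_t,(f,0)\rangle=t\) gives \(r_{u_t}(f,0)=(f,0)+t(-t^{-1}f,tw)=(0,t^2w)\).
\item Symmetrically, \(\langle v,(f,0)\rangle=1\) gives \(r_v(f,0)=(f,0)+(-f,w)=(0,w)\). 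Then \(\langle u_t,(0,w)\rangle=-t^{-1}\) gives \(r_{u_t}(0,w)=(0,w)-t^{-1}(-t^{-1}f,tw)=(t^{-2}f,0)\).
\end{itemize}
This agrees with \(\lambda_t\) on all four pieces, which proves that \(u_tv\) acts as \(\lambda_t\) and hence that \(\lambda_t\) lies in the image of \(\Spin(Q)\to\SO(Q)\).

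The main obstacle is sign bookkeeping. One must fix the polar-form convention (\(\langle\cdot,\cdot\rangle\) versus \(2B\)) consistently with Proposition~\ref{prop:transvection-unit}, and check that the two minus signs from the vector conjugations cancel, so that one obtains \(\lambda_t\) rather than \(-\lambda_t\) or \(\lambda_{t^{-1}}\). If the convention \(\langle a,a\rangle=2Q(a)\) is used instead, the factor \(\langle a,z\rangle/Q(a)\) in the reflection formula must be adjusted accordingly, and I would recheck the two plane computations under that normalization.
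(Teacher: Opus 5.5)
Your proof is correct and follows essentially the paper's route. The paper writes conjugation by \((-c^{-1}f,cw)\) as one closed formula and composes the cases \(c=1\) and \(c=t\). You check the same composite \(r_{u_t}r_v\) separately on the hyperbolic plane \(Kf\oplus Kw\) and on its orthogonal complement, and your sign bookkeeping, including the cancellation of the two minus signs, is right.

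Your closing caveat is unnecessary. The pairing \(\langle (d,u),(d',u')\rangle=d(u')+d'(u)\) already satisfies \(\langle a,a\rangle=2Q(a)\), so the convention you used is exactly the one forced by the Clifford relation, and no recheck is needed.
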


\begin{proof}
Because $f(w)=1$,
\[
  Q(u_t)=(-t^{-1}f)(tw)=-1,
  \qquad
  Q(v)=(-f)(w)=-1,
\]
so $u_t$ and $v$ are vectors of square $-1$, hence $u_t v\in \Spin(Q)$. A
direct reflection calculation gives, for any $c\in K^\times$,
\[
  r_{(-c^{-1}f,cw)}(d,u)
  =\bigl((d(w)-c^{-2}f(u))f-d,\ -(c^2d(w)-f(u))w-u\bigr).
\]
Applying first the reflection with $c=1$ and then the reflection with $c=t$
yields
\[
  (d,u)\longmapsto
  \bigl(d+(t^{-2}-1)d(w)f,\ u+(t^2-1)f(u)w\bigr)=\lambda_t(d,u).
\]
The displayed formula makes the scaling and fixed-kernel claims immediate, so
$\lambda_t$ is a square scaling along the chosen split line and lies in the
spin image.
\end{proof}

If $\ell_t\in \operatorname{GL}(W)$ denotes the underlying line scaling
\[
  \ell_t(u)=u+(t^2-1)f(u)w,
\]
then the same explicit lift has a fully normalized chosen-model action:
\[
  \rho_S(u_t v)=-t^{-1}\,(\bigwedge \ell_t).
\]
The minus sign is part of the normalization, not an error: the lift is a product
of two square-$-1$ vectors, and the chosen exterior model records this
two-reflection torus element with the scalar $-t^{-1}$ while its orthogonal
action scales the chosen line by $t^2$.
This is the exact torus-lift refinement of the projective Levi-action theorem.

\begin{corollary}[Chosen-line torus control of the transvection parameter]
\label{cor:torus-transvection}
Fix $w\in W$ and $f\in W^*$ with $f(w)=1$. Let $\lambda_t$ be the square-scaling
map from Proposition~\ref{prop:chosen-line-square}, and for $\delta(w)=0$ let
$T_{\delta,w}$ denote the transvection from
Proposition~\ref{prop:transvection-unit},
\[
  T_{\delta,w}(d,u)=\bigl(d+d(w)\delta,\ u-\delta(u)w\bigr).
\]
Then
\[
  \lambda_t\,T_{\delta,w}\,\lambda_t^{-1}=T_{t^2\delta,w}.
\]
Consequently the explicit lifts in
Propositions~\ref{prop:transvection-unit} and~\ref{prop:chosen-line-square}
control both the unipotent root subgroup
\[
  U_w=\{\,T_{\delta,w} : \delta(w)=0\,\}
\]
and the square torus generated by the $\lambda_t$, with the weight-$2$
conjugation action.
\end{corollary}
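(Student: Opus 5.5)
We verify the conjugation identity $\lambda_t\,T_{\delta,w}\,\lambda_t^{-1}=T_{t^2\delta,w}$ by direct evaluation on $(d,u)\in W^*\oplus W$. First, $\lambda_t^{-1}=\lambda_{t^{-1}}$. This is read off from the formula in Proposition~\ref{prop:chosen-line-square}: $\lambda_t$ acts on $W^*$ by $d\mapsto d+(t^{-2}-1)d(w)f$, which scales $Kf$ by $t^{-2}$ and fixes $\ker(\operatorname{ev}_w)$. Since $W^*=Kf\oplus\ker(\operatorname{ev}_w)$, it is this diagonal map. Likewise $\lambda_t$ acts on $W=Kw\oplus\ker(f)$ by $t^2$ on $Kw$ and the identity on $\ker f$.

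The key observation is that $\delta(w)=0$ puts $\delta$ in $\ker(\operatorname{ev}_w)$, and $w$ spans the $t^2$-eigenline. We compute the two coordinates separately.

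\emph{$W$-coordinate.} For $u\in W$ we have $T_{\delta,w}(u)=u-\delta(u)w$. Conjugating by the diagonal map $\ell_t$ gives $u\mapsto u-\delta(\ell_t^{-1}u)\,t^2w$. Decompose $u=f(u)w+u_0$ with $u_0\in\ker f$. Then $\delta(\ell_t^{-1}u)=t^{-2}f(u)\delta(w)+\delta(u_0)=\delta(u)$, because $\delta(w)=0$. So the $W$-component is $u-t^2\delta(u)w$, as required.

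\emph{$W^*$-coordinate.} For $d\in W^*$ the same bookkeeping applies. The map $\lambda_t^{-1}$ sends $d$ to $d'$ with $d'(w)=t^2d(w)$, since $\lambda_{t^{-1}}$ scales the $f$-component by $t^2$. Applying $T$ gives $d'+d'(w)\delta$. Applying $\lambda_t$ then fixes $\delta$, since $\delta\in\ker(\operatorname{ev}_w)$, and returns $d'$ to $d$. The result is $d+t^2d(w)\delta$, which is the first component of $T_{t^2\delta,w}(d,u)$.

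\emph{Alternative.} One could instead note that $\lambda_t=\Lambda(\ell_t)$ and that $T_{\delta,w}$ is the transported Levi-type unipotent map, then use functoriality. The direct computation above is shorter, so we use it.

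\emph{Consequences.} The map $\delta\mapsto T_{\delta,w}$ is a homomorphism from the additive group $\ker(\operatorname{ev}_w)$: composing the formulas and using $\delta(w)=\delta'(w)=0$ shows $T_{\delta,w}T_{\delta',w}=T_{\delta+\delta',w}$. So $U_w$ is an abelian group, and by the identity it is normalized by each $\lambda_t$, with torus weight $t^2$. Both families lie in the spin image: $T_{\delta,w}$ via $x_{\delta,w}$ from Proposition~\ref{prop:transvection-unit}, and $\lambda_t$ via $u_tv$ from Proposition~\ref{prop:chosen-line-square}. This gives the stated control.

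The only delicate point is sign and exponent bookkeeping, namely that conjugation produces $t^{2}$ rather than $t^{-2}$. The $W$-coordinate check settles this unambiguously.
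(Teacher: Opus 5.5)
Your proof is correct and follows essentially the same route as the paper. You split $W^*=Kf\oplus\ker(\operatorname{ev}_w)$ and $W=Kw\oplus\ker f$, read off $\lambda_t^{-1}=\lambda_{t^{-1}}$, and use that $\lambda_t$ fixes $\delta$ while scaling $w$ by $t^2$; you additionally verify the additive law that the paper defers to Corollary~\ref{cor:one-line-semidirect}. One small aside: Proposition~\ref{prop:transvection-unit} only shows that $x_{\delta,w}$ is an even Clifford unit, so your claim that $T_{\delta,w}$ lies in the spin image needs the one-line check $x_{\delta,w}\tilde x_{\delta,w}=\tilde x_{\delta,w}x_{\delta,w}=1$, which is immediate from $\iota(\delta,0)^2=\iota(0,w)^2=0$ and $\delta(w)=0$. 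The corollary itself only asks for explicit Clifford lifts.
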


\begin{proof}
Write $d=d_0+d(w)f$ with $d_0(w)=0$ and $u=u_0+f(u)w$ with $f(u_0)=0$. Then
\[
  \lambda_t(d,u)=\bigl(d_0+t^{-2}d(w)f,\ u_0+t^2f(u)w\bigr),
\]
so $\lambda_t^{-1}$ is obtained by replacing $t$ with $t^{-1}$. If
$\delta(w)=0$, then $\lambda_t$ fixes $\delta$ and scales $w$ by $t^2$. Applying
$\lambda_t^{-1}$, then $T_{\delta,w}$, and then $\lambda_t$ gives
\[
  (d,u)\longmapsto \bigl(d+t^2d(w)\delta,\ u-t^2\delta(u)w\bigr)
  = T_{t^2\delta,w}(d,u),
\]
which is the displayed conjugation formula. The final claim is the corresponding
subgroup interpretation: the formulas give explicit Clifford lifts of both the
unipotent subgroup $U_w$ and the semisimple square torus acting on it.
\end{proof}

\begin{corollary}[Internal Clifford-level torus action]
\label{cor:internal-clifford-torus}
Fix $w\in W$ and $f\in W^*$ with $f(w)=1$, and let
\[
  s_t = \iota(-t^{-1}f,tw)\,\iota(-f,w)\in \Spin(\Hyp(W))
\]
be the explicit chosen-line square-scaling lift from
Proposition~\ref{prop:chosen-line-square}. For $\delta(w)=0$, the explicit
transvection units satisfy
\[
  s_t\,x_{\delta,w}\,s_t^{-1}=x_{t^2\delta,w}.
\]
Thus the weight-$2$ torus action already holds inside the explicit Clifford
representatives, not only after passing to the orthogonal action.
\end{corollary}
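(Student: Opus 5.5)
We prove the identity \(s_t\,x_{\delta,w}\,s_t^{-1}=x_{t^2\delta,w}\) directly inside \(\Cl(\Hyp(W))\), using the standard formula for conjugation of a vector by an invertible vector. Write \(a=(\delta,0)\) and \(b=(0,w)\), so that \(x_{\delta,w}=1+\iota(a)\iota(b)\). Since conjugation by \(s_t\) is an algebra automorphism, we have
\[
  s_t\,x_{\delta,w}\,s_t^{-1}
  =1+\bigl(s_t\iota(a)s_t^{-1}\bigr)\bigl(s_t\iota(b)s_t^{-1}\bigr).
\]
It therefore suffices to compute \(s_t\iota(z)s_t^{-1}\) for the two vectors \(z=a\) and \(z=b\).

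\textbf{Key step.} For a vector \(y\) with \(\iota(y)^2=Q(y)=-1\), we have \(\iota(y)^{-1}=-\iota(y)\). The anticommutation relation then gives
\[
  \iota(y)\iota(z)\iota(y)^{-1}=-r_y(z),
\]
where \(r_y\) is the reflection in \(y\). This is exactly the reflection formula already used in the proof of Proposition~\ref{prop:chosen-line-square}. Applying it twice to \(s_t=\iota(u_t)\iota(v)\), the two signs cancel, and
\[
  s_t\iota(z)s_t^{-1}=\iota\bigl(r_{u_t}r_v(z)\bigr)=\iota(\lambda_t(z)).
\]
The composite \(r_{u_t}r_v\) is the map \(\lambda_t\) computed in Proposition~\ref{prop:chosen-line-square}.

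Next we evaluate \(\lambda_t\) on \(a\) and \(b\) using the explicit formula for \(\lambda_t\).

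\textbf{The vector \(a\).} Here \(a=(\delta,0)\), and \(\delta(w)=0\). The first coordinate is unchanged, since the correction term involves \(\delta(w)=0\). The second coordinate stays \(0\). Hence \(\lambda_t(a)=a\).

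\textbf{The vector \(b\).} Here \(b=(0,w)\), and \(f(w)=1\). The first coordinate stays \(0\). The second coordinate becomes \(w+(t^2-1)w=t^2w\). Hence \(\lambda_t(b)=t^2b\).

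Substituting these into the conjugated product gives
\[
  s_t\,x_{\delta,w}\,s_t^{-1}
  =1+t^2\,\iota(a)\iota(b)
  =1+\iota(t^2\delta,0)\,\iota(0,w)
  =x_{t^2\delta,w}.
\]
We also note that \(t^2\delta\) still satisfies \((t^2\delta)(w)=0\), so \(x_{t^2\delta,w}\) is a legitimate transvection unit in the sense of Proposition~\ref{prop:transvection-unit}.

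\textbf{Expected obstacle.} The only real point is the sign and ordering convention. We must confirm that Proposition~\ref{prop:chosen-line-square} composes the reflections in the same order as the Clifford product \(\iota(u_t)\iota(v)\): first \(r_v\), then \(r_{u_t}\). We must also confirm that the two minus signs from \(\iota(y)^{-1}=-\iota(y)\) indeed cancel. If the order turned out reversed, we would get \(\lambda_{t^{-1}}\), and hence \(x_{t^{-2}\delta,w}\) instead.

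To guard against this, we would recheck the two fixed computations \(\lambda_t(a)=a\) and \(\lambda_t(b)=t^2 b\) directly by hand. We would expand \(\iota(v)\iota(b)\iota(v)^{-1}\), using \(\langle v,b\rangle=f(w)=1\), and then expand the conjugation by \(\iota(u_t)\) in the same way. Everything else in the argument is formal.
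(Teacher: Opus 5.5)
Your proposal is correct and matches the paper's proof. Conjugation by $s_t$ is an algebra automorphism that acts on vectors as $\lambda_t$ (Proposition~\ref{prop:chosen-line-square}); $\lambda_t$ fixes $(\delta,0)$ and sends $(0,w)$ to $(0,t^2w)$, and linearity then gives $x_{t^2\delta,w}$. Your ordering concern resolves in your favor: the paper applies the $c=1$ reflection (for $v$) first and the $c=t$ reflection (for $u_t$) second, which matches $s_t=\iota(u_t)\iota(v)$, and the two sign factors cancel as you say.
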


\begin{proof}
By Proposition~\ref{prop:chosen-line-square}, conjugation by $s_t$ fixes the
vector $(\delta,0)$ and sends $(0,w)$ to $(0,t^2w)$. Therefore
\[
  s_t\,x_{\delta,w}\,s_t^{-1}
  = 1 + \iota(\delta,0)\,\iota(0,t^2w).
\]
Because $\iota$ is $K$-linear and scalars are central in the Clifford algebra,
\[
  \iota(\delta,0)\,\iota(0,t^2w)
  = \iota(\delta,0)\,t^2\iota(0,w)
  = \iota(t^2\delta,0)\,\iota(0,w),
\]
so the right-hand side is $x_{t^2\delta,w}$.
\end{proof}

\begin{corollary}[Semidirect product along a chosen split line]
\label{cor:one-line-semidirect}
For fixed $w\in W$ and $f\in W^*$ with $f(w)=1$, the maps $T_{\delta,w}$ with
$\delta(w)=0$ satisfy
\[
  T_{\delta,w}\,T_{\eta,w}=T_{\delta+\eta,w}
  \qquad
  (\delta(w)=\eta(w)=0),
\]
so they form an additive subgroup naturally identified with
$\ker(\operatorname{ev}_w)\subset W^*$. Together with the square scalings
$\lambda_t$, these maps form the semidirect product
\[
  \ker(\operatorname{ev}_w)\rtimes K^\times,
  \qquad
  t\cdot \delta = t^2\delta,
\]
inside the orthogonal action along the chosen line $Kw$.
\end{corollary}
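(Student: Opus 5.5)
The plan is to verify the two claims directly from the explicit formulas in Proposition~\ref{prop:transvection-unit} and Proposition~\ref{prop:chosen-line-square}, working with the decompositions \(d=d_0+d(w)f\) and \(u=u_0+f(u)w\) already used in the proof of Corollary~\ref{cor:torus-transvection}.

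\emph{Additivity.} For the additive law, compute \(T_{\delta,w}T_{\eta,w}(d,u)\) with \(\delta(w)=\eta(w)=0\). First apply \(T_{\eta,w}\), which gives \(\bigl(d+d(w)\eta,\ u-\eta(u)w\bigr)\). Then apply \(T_{\delta,w}\), using the two evaluations
\[
  \bigl(d+d(w)\eta\bigr)(w)=d(w)+d(w)\eta(w)=d(w),
  \qquad
  \delta\bigl(u-\eta(u)w\bigr)=\delta(u).
\]
This yields \(\bigl(d+d(w)(\delta+\eta),\ u-(\delta+\eta)(u)w\bigr)=T_{\delta+\eta,w}(d,u)\). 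From the defining formula, \(T_{0,w}=\mathrm{id}\) and \(\delta\mapsto T_{\delta,w}\) is injective, because \(d(w)\delta\) recovers \(\delta\) when we take \(d=f\). Hence \(U_w\) is an additive group isomorphic to \(\ker(\operatorname{ev}_w)\).

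\emph{Torus and semidirect structure.} Next, \(t\mapsto\lambda_t\) is a homomorphism \(K^\times\to\SO(Q)\). On the decomposition it acts as \(d_0+d(w)f\mapsto d_0+t^{-2}d(w)f\) and \(u_0+f(u)w\mapsto u_0+t^2f(u)w\), so \(\lambda_s\lambda_t=\lambda_{st}\). Corollary~\ref{cor:torus-transvection} gives \(\lambda_tT_{\delta,w}\lambda_t^{-1}=T_{t^2\delta,w}\), so the torus normalizes \(U_w\) and acts on it by the weight-\(2\) action. The remaining point is \(U_w\cap\{\lambda_t\}=\{1\}\). Since \(\delta(w)=0\), we have \(T_{\delta,w}(0,w)=(0,w)\), while \(\lambda_t(0,w)=(0,t^2w)\). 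So equality forces \(t^2=1\), and then \(\lambda_t=\mathrm{id}\).

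\emph{Main obstacle.} The delicate issue is the kernel of \(t\mapsto\lambda_t\): \(\lambda_{-1}=\lambda_1\), so the torus image is really \(K^\times/\{\pm1\}\cong(K^\times)^2\), not \(K^\times\). I would therefore state the semidirect product as \(U_w\rtimes\{\lambda_t\}\), with \(\{\lambda_t\}\) the image of \(K^\times\). Equivalently, it is the image of the abstract group \(\ker(\operatorname{ev}_w)\rtimes K^\times\), with action \(t\cdot\delta=t^2\delta\), under the homomorphism \((\delta,t)\mapsto T_{\delta,w}\lambda_t\). That map is well defined by the conjugation law and has kernel \(\{0\}\times\{\pm1\}\). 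I read the corollary's ``inside the orthogonal action'' in this sense.
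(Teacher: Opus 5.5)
Your proposal is correct and follows the paper's route. The additive law comes from the same two evaluations \((d+d(w)\eta)(w)=d(w)\) and \(\delta(u-\eta(u)w)=\delta(u)\), and the semidirect structure is read off from the weight-\(2\) law \(\lambda_tT_{\delta,w}\lambda_t^{-1}=T_{t^2\delta,w}\) of Corollary~\ref{cor:torus-transvection}. The paper leaves your extra checks implicit (injectivity of \(\delta\mapsto T_{\delta,w}\), \(\lambda_s\lambda_t=\lambda_{st}\), and \(U_w\cap\{\lambda_t\}=\{1\}\)); your observation that \(\lambda_{-1}=\mathrm{id}\), so the maps realize \(\ker(\operatorname{ev}_w)\rtimes K^\times\) only modulo the central subgroup \(\{0\}\times\{\pm1\}\), is correct and is the right precise reading of the statement.
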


\begin{proof}
Apply the explicit formula for $T_{\eta,w}$ first:
\[
  (d,u)\longmapsto \bigl(d+d(w)\eta,\ u-\eta(u)w\bigr).
\]
Because $\eta(w)=0$, the scalar $d(w)$ is unchanged on the dual side, and
$\delta(u-\eta(u)w)=\delta(u)$ because $\delta(w)=0$. Therefore
\[
  T_{\delta,w}T_{\eta,w}(d,u)
  =
  \bigl(d+d(w)(\delta+\eta),\ u-(\delta(u)+\eta(u))w\bigr)
  =
  T_{\delta+\eta,w}(d,u).
\]
This proves the additive subgroup law. The semidirect-product statement is then
the combination of this identity with
Corollary~\ref{cor:torus-transvection}, which shows
$\lambda_t T_{\delta,w}\lambda_t^{-1}=T_{t^2\delta,w}$.
\end{proof}

The previous results supply the explicit root and torus directions. We next
prove the structural Levi-action theorem from
Section~\ref{sec:results}.

\begin{proof}[Proof of Theorem~\ref{thm:levi-projective-exterior}]
Let $v=\rho_S(s)1\in S$. For any $d\in W^*$, because $s$ projects to
$\Lambda(g)$, conjugation carries the dual generator $\iota(g^{\vee}d,0)$ to
$\iota(d,0)$. Applying both sides to $1$ gives
\[
  \iota(d,0)\,v
  =
  \rho_S(s)\bigl(\iota(g^{\vee}d,0)\cdot 1\bigr)
  =0,
\]
since dual generators act on $S$ by contraction and annihilate the vacuum
vector $1$. Thus every contraction operator kills $v$, and the vacuum-line
criterion from Section~\ref{sec:arch} gives $v=c\cdot 1$ for some $c\in K$.

Similarly, for every $w\in W$, conjugation carries $\iota(0,w)$ to
$\iota(0,gw)$, so $\rho_S(s)$ intertwines exterior multiplication by $w$ with
exterior multiplication by $gw$. Starting from $\rho_S(s)1=c$ and inducting on
exterior monomials yields
\[
  \rho_S(s)=c\,(\bigwedge g).
\]
Because both $\rho_S(s)$ and $\bigwedge g$ are invertible, necessarily
$c\ne 0$.
\end{proof}

The preceding theorem identifies the chosen-model action of any split Levi lift
up to scalar. We next prove the corresponding higher-rank factorization and
explicit-lift theorem from Section~\ref{sec:results}.

\begin{proposition}[Auxiliary-index commutator for elementary Levi transvections]
\label{prop:auxiliary-index-commutator}
Let \(Q=\Hyp(W)\), choose a basis \((e_\ell)_{\ell\in I}\) of \(W\), and
write \(\varepsilon_\ell\) for the dual coordinate functionals. If \(i,j,k\in I\)
are pairwise distinct and \(c\in K\), then the transported elementary Levi
transvection
\[
  (d,u)\longmapsto
  \bigl(d-c\,d(e_i)\varepsilon_j,\ u+c\,\varepsilon_j(u)e_i\bigr)
\]
lies in the image of \(\Spin(Q)\to\SO(Q)\).
\end{proposition}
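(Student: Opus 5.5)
The plan is to realize the given map as an explicit product of pair-reflection elements \(r_{u'}r_{u}\), where \(u,u'\) are vectors of \(Q\)-value \(-1\) built from the auxiliary hyperbolic plane \(Ke_k\oplus K\varepsilon_k\). Such products are in \(\Spin(Q)\) by the standing conventions, so this avoids any converse from vector-preserving units to products of vectors.

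First I would identify the target map with a transvection of Proposition~\ref{prop:transvection-unit}. Taking \(w=e_i\) and \(\delta=-c\,\varepsilon_j\) gives \(\delta(w)=-c\,\varepsilon_j(e_i)=0\) since \(i\neq j\). Then
\[
  T_{\delta,w}(d,u)=\bigl(d-c\,d(e_i)\varepsilon_j,\ u+c\,\varepsilon_j(u)e_i\bigr),
\]
so the map to be lifted is \(T_{-c\varepsilon_j,e_i}\), and its Clifford representative is \(x_{-c\varepsilon_j,e_i}\). One direct route is already available, and I would note it as a fallback: \(x=1+\iota(a)\iota(b)\) is even and preserves \(V\). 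Moreover \(x\tilde x=(1+ab)(1+ba)=1+(ab+ba)+ab\,ba=1\), because \(ab+ba=\langle a,b\rangle=0\) and \(b^2=Q(b)=0\). So \(x\) lies in \(\Spin(Q)\) by the norm-one even Lipschitz definition. The auxiliary index is what lets us do better and give an explicit reflection factorization.

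For the factorization, set \(u_0=(-\varepsilon_k,e_k)\), so \(Q(u_0)=-1\). For parameters \(\alpha,\beta\in K\), set \(u_{\alpha,\beta}=(-\varepsilon_k+\alpha\varepsilon_j,\ e_k+\beta e_i)\). Pairwise distinctness of \(i,j,k\) kills all cross terms, so \(Q(u_{\alpha,\beta})=-1\) as well. Hence every \(g_{\alpha,\beta}=r_{u_{\alpha,\beta}}r_{u_0}\) lies in the spin image.

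I would compute \(g_{\alpha,\beta}\) explicitly using the reflection formula already used in Proposition~\ref{prop:chosen-line-square}. I expect it to be an Eichler-type transformation supported on \(\mathrm{span}(e_i,e_k,\varepsilon_j,\varepsilon_k)\). Specifically, \(g_{0,\beta}\) should be the root element moving \(e_k\) toward \(e_i\), \(g_{\alpha,0}\) should be the root element moving \(\varepsilon_k\) toward \(\varepsilon_j\), and \(g_{\alpha,\beta}\) should combine both plus an \(\alpha\beta\) cross term \(\varepsilon_j(u)e_i\) and its dual.

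The target should then come out as the group commutator of an \((i,k)\) root element and a \((k,j)\) root element, that is, the Levi identity \([E_{ik}(c),E_{kj}(1)]=E_{ij}(c)\) transported by \(\Lambda\). Concretely, I would check that \(g_{\alpha,\beta}\,g_{\alpha,0}^{-1}\,g_{0,\beta}^{-1}\) (or a suitable reordering) equals \(T_{-\alpha\beta\varepsilon_j,e_i}\), and then take \(\alpha\beta=c\), for instance \(\alpha=c\) and \(\beta=1\). Each factor is a product of two \(Q=-1\) reflections, so the product lies in the spin image.

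The main obstacle is this explicit bookkeeping. I must pin down the exact formula for \(g_{\alpha,\beta}\), including signs and any residual action on the \(k\)-plane, and then choose the correct ordering so that the \(k\)-plane contributions cancel and only the \(c\,\varepsilon_j(u)e_i\) term survives. If the pure reflection products leave a stray scaling on \(Ke_k\), I would absorb it with a chosen-line scaling \(\lambda_t\) from Proposition~\ref{prop:chosen-line-square}. Failing that, I would fall back on the norm-one argument for \(x_{-c\varepsilon_j,e_i}\) given above.
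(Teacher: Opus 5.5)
Your proposal is correct. Your ``fallback'' paragraph is already a complete proof, and it follows a route the paper deliberately avoids. The paper's stated convention is \(\Spin(V,Q)=\{x\in\Cl^0(V,Q)^\times: xVx^{-1}=V,\ x\tilde x=\tilde x x=1\}\). Proposition~\ref{prop:transvection-unit} shows that \(x=x_{-c\varepsilon_j,e_i}\) is an even unit whose conjugation action is the bijection \(T_{-c\varepsilon_j,e_i}\) of \(V\), so \(xVx^{-1}=V\). Your computation gives \(x\tilde x=1\), and invertibility of \(x\) then gives \(\tilde x=x^{-1}\), hence also \(\tilde x x=1\). The image of \(x\) is exactly the target map, and the index \(k\) is never used.

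The paper instead writes the target as the explicit product \(P_{ki}(1)P_{kj}(c/2)P_{ki}(-1)P_{kj}(-c/2)P_{ij}(-c)\) of images of pair-reflection elements, so that the lift is visibly a product of \(Q=-1\) vectors. That is the witness needed when \(\Spin\) is defined via products of vectors, as in the Lean companion; there your direct membership argument would require the classical converse. The cost of the paper's witness is the auxiliary index, i.e.\ \(\dim W\ge 3\). Your direct argument works in any rank, and combined with the rest of the paper's proof it would extend Theorem~\ref{thm:square-det-levi-image} to \(\dim W=2\).

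Your primary route also works and is closer in spirit to the paper's, but two of your predictions need correcting. Put \(h=(\alpha\varepsilon_j,\beta e_i)\), which is isotropic and orthogonal to \(u_0\). Then \(g_{\alpha,\beta}(z)=z+\langle z,h\rangle u_0-\langle z,u_0\rangle h+\langle z,h\rangle h\), an Eichler transformation that is not a Levi element. For example, in the coordinates \(x_\ell=d(e_\ell)\), \(y_\ell=\varepsilon_\ell(u)\) for \(z=(d,u)\), the map \(g_{0,\beta}\) sends \(y_i\mapsto y_i+\beta(y_k-x_k)+\beta^2x_i\). So the transported identity \([E_{ik},E_{kj}]=E_{ij}\) is not the mechanism. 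What actually happens is that the generators \(z\mapsto\langle z,h\rangle u_0-\langle z,u_0\rangle h\), together with the generator of the target, span a Heisenberg algebra whose center is the target root direction.

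A direct check gives \(g_{0,\beta}g_{\alpha,0}=T_{-\alpha\beta\varepsilon_j,e_i}\,g_{\alpha,\beta}\). Hence your ordering yields \(g_{\alpha,\beta}g_{\alpha,0}^{-1}g_{0,\beta}^{-1}=T_{\alpha\beta\varepsilon_j,e_i}\), the opposite sign from what you predicted; take \(\alpha=-c\), \(\beta=1\). No stray scaling on \(Ke_k\) appears. Since \(r_{u_0}^2=1\), the target equals \(r_{u_{\alpha,\beta}}r_{u_{\alpha,0}}r_{u_0}r_{u_{0,\beta}}\), which is the image of only two pair-reflection spin elements. The paper uses five, and its generators \(P_{pq}(a)\) are Eichler maps of the same kind but with a purely dual-side perturbation \(h=(-a\varepsilon_q,0)\).
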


\begin{proof}
For distinct \(p,q\in I\) and \(a\in K\), set
\[
  s_{pq}(a)=
    \iota\bigl(-(\varepsilon_p+a\varepsilon_q),e_p\bigr)
    \iota(-\varepsilon_p,e_p)\in \Spin(Q).
\]
Both vectors have \(Q\)-value \(-1\), since
\((\varepsilon_p+a\varepsilon_q)(e_p)=1\). Let \(P_{pq}(a)\) be the image of
\(s_{pq}(a)\) in \(\SO(Q)\). The two-reflection formula gives, for
\((d,u)\in W^*\oplus W\),
\[
\begin{aligned}
P_{pq}(a)(d,u)=
\bigl(&d+(d(e_p)-\varepsilon_p(u))a\varepsilon_q
        +a\varepsilon_q(u)(\varepsilon_p+a\varepsilon_q),\\
      &u-a\varepsilon_q(u)e_p\bigr).
\end{aligned}
\]
Put \(x_\ell=d(e_\ell)\) and \(y_\ell=\varepsilon_\ell(u)\). In the
\((p,q)\)-coordinates this formula is exactly
\[
  x_p\mapsto x_p+a y_q,\qquad
  x_q\mapsto x_q+a(x_p-y_p)+a^2y_q,\qquad
  y_p\mapsto y_p-a y_q,
\]
with all other displayed \(x\)- and \(y\)-coordinates unchanged except those
forced by these three assignments.

The strategy is an elementary-commutator identity with an auxiliary index:
the \(k\)-supported pair generators introduce and then cancel the
auxiliary Clifford root terms, leaving only the
Levi transvection in the \((i,j)\) root direction. We apply the coordinate rule to
the product
\[
  P_{ki}(1)\,P_{kj}(c/2)\,P_{ki}(-1)\,P_{kj}(-c/2)\,P_{ij}(-c),
\]
where the rightmost factor acts first. Writing only the six coordinates with
indices \(i,j,k\), the successive states are
\begingroup
\small
\[
\begin{aligned}
z_0={}&(x_i,\ x_j,\ x_k;\ y_i,\ y_j,\ y_k),\\
z_1={}&(x_i-cy_j,\ x_j-cx_i+cy_i+c^2y_j,\ x_k;\\
&\quad y_i+cy_j,\ y_j,\ y_k),\\
z_2={}&(x_i-cy_j,
  x_j-cx_i+cy_i-\tfrac c2x_k+\tfrac c2y_k+\tfrac54c^2y_j,
  x_k-\tfrac c2y_j;\\
&\quad y_i+cy_j,\ y_j,\ y_k+\tfrac c2y_j),\\
z_3={}&(x_i+cy_j-x_k+y_i+y_k,
  x_j-cx_i+cy_i-\tfrac c2x_k+\tfrac c2y_k+\tfrac54c^2y_j,
  x_k-y_i-\tfrac32cy_j;\\
&\quad y_i+cy_j,\ y_j,\ y_k+y_i+\tfrac32cy_j),\\
z_4={}&(x_i+cy_j-x_k+y_i+y_k,\ x_j-cx_i,\ x_k-y_i-cy_j;\\
&\quad y_i+cy_j,\ y_j,\ y_k+y_i+cy_j),\\
z_5={}&(x_i,\ x_j-cx_i,\ x_k;\ y_i+cy_j,\ y_j,\ y_k).
\end{aligned}
\]
\endgroup
after applying \(P_{ij}(-c)\), \(P_{kj}(-c/2)\), \(P_{ki}(-1)\),
\(P_{kj}(c/2)\), and \(P_{ki}(1)\), respectively. In the final step, the
auxiliary coordinates return to their original values.
Thus the product sends
\[
 (x_i,x_j,x_k;\,y_i,y_j,y_k)
 \longmapsto
 (x_i,\ x_j-cx_i,\ x_k;\, y_i+cy_j,\ y_j,\ y_k).
\]
All coordinates outside \(\{i,j,k\}\) are fixed throughout. Translating this
coordinate statement back to \(W^*\oplus W\) gives exactly
\[
  (d,u)\longmapsto
  \bigl(d-c\,d(e_i)\varepsilon_j,\ u+c\,\varepsilon_j(u)e_i\bigr).
\]
Since each \(P_{pq}(a)\) is the image of the spin element \(s_{pq}(a)\), their
product is also in the spin image. Hence the transported elementary Levi
transvection is in the spin image.
\end{proof}

\begin{proof}[Proof of Theorem~\ref{thm:square-det-levi}]
Choose a basis \(e_1,\ldots,e_n\) of $W$ and write the matrix of $g$ in that
basis. The classical transvection reduction for invertible matrices in dimension
at least \(2\), equivalently Gaussian elimination by elementary matrices, writes
\[
  g=A\,D\,B,
\]
where \(A\) and \(B\) are products of elementary basis transvections and
\(D=\operatorname{diag}(t_1,\ldots,t_n)\). Since \(\det(g)=u^2\), we have
\(\prod_i t_i=u^2\). Fix the distinguished line \(Ke_1\). Then
\[
  D=L_1(u^2)\prod_{j=2}^n D_{j1}(t_j),
\]
where \(L_1(u^2)\) scales \(e_1\) by \(u^2\) and fixes the other basis vectors,
and \(D_{j1}(a)\) scales \(e_j\) by \(a\), scales \(e_1\) by \(a^{-1}\), and
fixes the remaining basis vectors. Indeed, the right side scales \(e_j\) by
\(t_j\) for \(j>1\), while it scales \(e_1\) by
\[
  u^2\prod_{j=2}^n t_j^{-1}=t_1.
\]

Each determinant-one two-line block is elementary. With \(T_{ij}(r)\) denoting
the transvection \(e_j\mapsto e_j+r e_i\), direct multiplication on the
\((e_i,e_j)\)-plane gives
\[
  D_{ij}(a)=
  T_{ij}(a-1)\,T_{ji}(1)\,T_{ij}(a^{-1}-1)\,T_{ji}(-a).
\]
Transporting the decomposition of \(g\) through the Levi embedding
\(\Lambda(h)=(h^{-{\vee}},h)\) therefore expresses \(\Lambda(g)\) as transported
hyperbolic transvections, one chosen-line square scaling, and more transported
hyperbolic transvections. This is the asserted orthogonal factorization.

It remains to compute the action on the chosen spinor model. For a transported
hyperbolic transvection with \(\delta(w)=0\), Proposition~\ref{prop:transvection-unit}
uses the Clifford unit
\[
  x_{\delta,w}=1+\iota(\delta,0)\iota(0,w).
\]
On \(S=\bigwedge W\) this operator is
\[
  1+(\delta\wedge -)\,\iota_w,
\]
which is exactly \(\bigwedge(1+\delta\otimes w)\), because
\((\delta\otimes w)^2=0\). Thus every transported transvection lift acts with
scalar \(1\) on the exterior model. The chosen-line square scaling
from Proposition~\ref{prop:chosen-line-square}, applied with parameter \(u\),
acts as \(-u^{-1}\bigwedge L_1(u^2)\). Multiplying the units attached to the
factorization above therefore gives an even unitary Clifford unit \(x\) with
\[
  \rho_S(x)=-u^{-1}(\bigwedge g),
\]
which is the claimed explicit chosen-model lift. If \(g\in\operatorname{SL}(W)\)
then \(u=1\), and the chosen-line factor is \(L_1(1)=1\); hence
\(\Lambda(g)\) is generated by transported hyperbolic transvections alone.
\end{proof}

\begin{proposition}[One-line spin-image obstruction]
\label{prop:line-projector-converse}
Let \(Q=\Hyp(W)\), choose a basis of \(W\), and fix a basis index \(i_0\).
For \(t\in K^\times\), let \(L_{i_0}(t)\) scale \(e_{i_0}\) by \(t\) and fix all
other basis vectors. If
\[
  \Lambda(L_{i_0}(t))\in
  \operatorname{im}\bigl(\Spin(Q)\to\SO(Q)\bigr),
\]
then \(t\in(K^\times)^2\).
\end{proposition}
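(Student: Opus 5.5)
Suppose \(s\in\Spin(Q)\) projects to \(\Lambda(L_{i_0}(t))\). The plan is to pin down the scalar in Theorem~\ref{thm:levi-projective-exterior} using the spin norm \(s\tilde s=1\), and read off a square root of \(t\).

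First, Theorem~\ref{thm:levi-projective-exterior} gives \(c\in K^\times\) with \(\rho_S(s)=c\,\bigwedge L_{i_0}(t)\). On basis vectors this operator sends \(e_J\) to \(c\,e_J\) when \(i_0\notin J\), and to \(ct\,e_J\) when \(i_0\in J\). In the notation of the projectors \(L_iD_i\) and \(D_iL_i\), this reads
\[
  \rho_S(s)=c\bigl(D_{i_0}L_{i_0}+t\,L_{i_0}D_{i_0}\bigr).
\]
By Theorem~\ref{thm:matrix-units-split} the action \(\rho_S\) is injective, so in the Clifford algebra we get
\[
  s=c\bigl(\iota(\varepsilon_{i_0},0)\iota(0,e_{i_0})+t\,\iota(0,e_{i_0})\iota(\varepsilon_{i_0},0)\bigr).
\]
Here I use that \(\iota(\varepsilon_i,0)\) acts by the contraction \(D_i\) and \(\iota(0,e_i)\) by the wedge \(L_i\). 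This is the one-line projector description.

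Next, compute \(\tilde s\). Write \(a=\iota(\varepsilon_{i_0},0)\) and \(b=\iota(0,e_{i_0})\). Clifford conjugation satisfies \(\widetilde{ab}=ba\), so
\[
  \tilde s=c(ba+t\,ab).
\]
Put \(p=ab\) and \(q=ba\). Since \(a^2=b^2=0\) and \(p+q=1\), these are complementary orthogonal idempotents (\(pq=qp=0\)). Hence
\[
  s\tilde s=c^2(p+tq)(q+tp)=c^2t(p+q)=c^2t.
\]
The unitarity condition \(s\tilde s=1\) then gives \(t=c^{-2}\in(K^\times)^2\), which is what we want.

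The main obstacle is the sign and normalization bookkeeping. One must check that the wedge/contraction conventions really make \(ab\) act as \(D_{i_0}L_{i_0}\), and not as \(L_{i_0}D_{i_0}\) (they might be swapped). Either way the argument is symmetric in which idempotent is scaled by \(t\), so \(s\tilde s=c^2t\) survives. One must also confirm that Clifford conjugation of a product of two vectors is \(ba\) with no extra sign; this is \(\widetilde{vw}=wv\) from the standing conventions. As a sanity check, the lift \(u_tv\) of Proposition~\ref{prop:chosen-line-square} has \(c=-t^{-1}\) and scales by \(t^2\), consistent with \(c^2t^2=1\).
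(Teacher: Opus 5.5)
Your proof is correct and follows the paper's argument. You use Theorem~\ref{thm:levi-projective-exterior} to get $\rho_S(s)=c\bigwedge L_{i_0}(t)$, then use faithfulness to rewrite this as $s=c(ab+t\,ba)$, which is the paper's $s=cP_{\mathrm{out}}+ctP_{\mathrm{in}}$. Clifford conjugation swaps the two occupation projectors, and $1=s\tilde s=c^2t$ gives $t=c^{-2}$. Your identification of $ab$ with $D_{i_0}L_{i_0}$ agrees with the paper's $P_{\mathrm{out}}$, so the convention you flagged as a possible obstacle works out the same way.
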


\begin{proof}
Let \(s\in\Spin(Q)\) lift \(\Lambda(L_{i_0}(t))\). By
Theorem~\ref{thm:levi-projective-exterior}, there exists \(c\in K^\times\) such
that
\[
  \rho_S(s)=c\,\bigwedge L_{i_0}(t)
\]
on \(S=\bigwedge W\). Let \(P_{\mathrm{in}}\) and \(P_{\mathrm{out}}\) be the
complementary occupation projectors associated with \(e_{i_0}\):
\[
  P_{\mathrm{in}}=\iota(0,e_{i_0})\iota(e_{i_0}^*,0),\qquad
  P_{\mathrm{out}}=\iota(e_{i_0}^*,0)\iota(0,e_{i_0}).
\]
Under the split exterior action, \(P_{\mathrm{in}}\) projects onto the span of
the exterior monomials containing \(e_{i_0}\), and \(P_{\mathrm{out}}\) projects
onto the span of the monomials not containing \(e_{i_0}\). Thus
\[
  P_{\mathrm{in}}+P_{\mathrm{out}}=1,\qquad
  P_{\mathrm{in}}P_{\mathrm{out}}=P_{\mathrm{out}}P_{\mathrm{in}}=0.
\]
Clifford conjugation interchanges them:
\[
  \widetilde{P_{\mathrm{in}}}=P_{\mathrm{out}},\qquad
  \widetilde{P_{\mathrm{out}}}=P_{\mathrm{in}}.
\]
The displayed formula for \(\rho_S(s)\) gives
\[
  sP_{\mathrm{in}}=(ct)P_{\mathrm{in}},
  \qquad
  sP_{\mathrm{out}}=cP_{\mathrm{out}},
\]
as Clifford elements, because the split Clifford action is faithful. Hence
\[
  s=s(P_{\mathrm{in}}+P_{\mathrm{out}})
   =ctP_{\mathrm{in}}+cP_{\mathrm{out}},
\]
and applying Clifford conjugation gives
\[
  \tilde{s}=ctP_{\mathrm{out}}+cP_{\mathrm{in}}.
\]
Since \(s\in\Spin(Q)\), one has \(s\tilde{s}=1\). Therefore
\[
\begin{aligned}
  1=s\tilde{s}
    &=(ctP_{\mathrm{in}}+cP_{\mathrm{out}})
      (ctP_{\mathrm{out}}+cP_{\mathrm{in}})\\
    &=c^2t(P_{\mathrm{in}}+P_{\mathrm{out}})
      =c^2t.
\end{aligned}
\]
Thus \(c^2t=1\), and hence \(t=(c^{-1})^2\), as required.
\end{proof}

\begin{proof}[Proof of Theorem~\ref{thm:square-det-levi-image}]
Let \(H\) be the image of \(\Spin(\Hyp(W))\to\SO(\Hyp(W))\). We first
prove the implication from square determinant to spin-image membership. Since
\(H\) is a subgroup, it is enough to prove that the standard generators of the
square-determinant Levi subgroup lie in \(H\). Relative to a basis, this subgroup
is generated by transported elementary basis transvections and by basis-diagonal
scalings whose total determinant is a square. Classically this same subgroup is
the kernel of the Levi spinor norm, because the spinor norm on the split Levi is
\(\det\) modulo \((K^\times)^2\); the argument below uses only the displayed
square-determinant generator description.

First consider an elementary transported basis transvection in the \((i,j)\)
root direction. Because \(\dim W\ge3\), there is a basis index \(k\) distinct
from both \(i\) and \(j\). Proposition~\ref{prop:auxiliary-index-commutator}
then writes that transvection as a product of five pair generators
\[
  P_{ki}(1)\,P_{kj}(c/2)\,P_{ki}(-1)\,P_{kj}(-c/2)\,P_{ij}(-c),
\]
each of which is the image of an explicit spin element. Thus every transported
elementary basis transvection lies in \(H\).

It remains to treat a basis-diagonal scaling
\[
  D=\operatorname{diag}(t_\ell)_{\ell\in I},\qquad \prod_{\ell\in I}t_\ell=u^2.
\]
Choose a distinguished index \(i_0\). The square-determinant diagonal
factorization gives
\[
  D=L_{i_0}(u^2)\prod_{j\ne i_0}D_{j i_0}(t_j),
\]
where \(L_{i_0}(u^2)\) is the chosen-line square scaling on \(Ke_{i_0}\), and
\(D_{ij}(a)\) denotes the determinant-one block scaling \(e_i\) by \(a\) and
\(e_j\) by \(a^{-1}\). Proposition~\ref{prop:chosen-line-square} puts
\(L_{i_0}(u^2)\) in \(H\). Each two-line block admits the four-transvection
factorization
\[
\begin{aligned}
  D_{ij}(a)=&
  T_{ij}(a-1)\,T_{ji}(1)\,T_{ij}(a^{-1}-1)\,T_{ji}(-a),
\end{aligned}
\]
with \(T_{ij}(r)\) denoting the elementary transvection
\(e_j\mapsto e_j+r e_i\) and fixing the other basis vectors. The transvection
case above places every factor on the right in \(H\), hence
\(D_{ij}(a)\in H\), and therefore the whole diagonal scaling \(D\) lies in
\(H\).

Both types of generators of the square-determinant Levi subgroup are therefore
in \(H\). Since \(H\) is a subgroup, the subgroup generated by them is contained
in \(H\). This proves
\[
  \det(g)\in(K^\times)^2 \quad\Longrightarrow\quad \Lambda(g)\in H.
\]

For the converse, suppose \(\Lambda(g)\in H\), and put
\(d=\det(g)\). Choose a basis index \(i_0\), and let \(L_{i_0}(d)\) be the
diagonal linear automorphism of \(W\) that multiplies \(e_{i_0}\) by \(d\) and
fixes the other basis vectors. Then
\[
  q:=L_{i_0}(d)^{-1}g
\]
has determinant \(1\). By the forward implication,
\(\Lambda(q)\in H\). Hence
\[
  \Lambda(L_{i_0}(d))
    =\Lambda(g)\Lambda(q)^{-1}
\]
also lies in \(H\). Proposition~\ref{prop:line-projector-converse} applied with
\(t=d\) gives \(d\in(K^\times)^2\). This proves the converse and hence the exact
criterion.
\end{proof}

\paragraph{Example: nonsquare-field obstruction.}
Over \(K=\mathbb{Q}\), the split-line reciprocal scaling
\[
  (x,y)\longmapsto (2x,2^{-1}y)
\]
lies in \(\SO(\Hyp_{\mathbb{Q}}(\mathbb{Q}))\) but not in the spin image,
because \(2\notin(\mathbb{Q}^\times)^2\). In rank at least three the same
obstruction appears on the split Levi: the diagonal \(g=\operatorname{diag}
(2,1,\ldots,1)\) gives \(\det(g)=2\), hence \(\Lambda(g)\) is an orthogonal
Levi element but is not in the image of
\(\Spin(\Hyp_{\mathbb{Q}}(W))\to\SO(\Hyp_{\mathbb{Q}}(W))\).

\paragraph{A rank-$2$ shear.}
If $W=K^2$ with basis $e_1,e_2$, take $w=e_1$, $f=e_1^*$, and
$\delta=e_2^*$. Then Proposition~\ref{prop:transvection-unit} gives the
explicit higher-rank shear
\[
  (d_1,d_2,u_1,u_2)\longmapsto (d_1,d_2+d_1,\ u_1-u_2,\ u_2).
\]
Corollary~\ref{cor:torus-transvection} says that the chosen-line square
scalings rescale this shear parameter by $t^2$, exhibiting the standard
root-group / torus interaction already inside the explicit Clifford formulas.

\begin{proposition}[Split-line square-scaling calculation]
\label{prop:split-line-square}
Let $Q=\Hyp(K)$, and choose isotropic generators $p$ and $q$ with
\(p^2=q^2=0\) and \(pq+qp=1\). Every even Clifford element can then be written
as \(x=a\,pq+b\,qp\). If \(x\in \Spin(Q)\), the defining norm condition forces
\(ab=1\), and an explicit conjugation calculation gives
\[
  x p x^{-1}=a^2 p,
  \qquad
  x q x^{-1}=a^{-2} q.
\]
Hence the image of $\Spin(Q)\to \SO(Q)$ is exactly the square-scaling subgroup.
\end{proposition}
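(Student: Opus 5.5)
The plan is to work entirely inside the two-dimensional split model. Write $V=Kf\oplus Kw$ with $p=\iota(f,0)$ and $q=\iota(0,w)$, so that $p^2=q^2=0$ and $pq+qp=1$. The even Clifford algebra $\Cl^0(Q)$ is spanned by $1$ and $pq$. Since $1=pq+qp$, it is also spanned by the two elements $pq$ and $qp$. These are complementary orthogonal idempotents: $(pq)^2=p(qp)q=p(1-pq)q=pq$, and $pq\cdot qp=0$. This gives the normal form $x=a\,pq+b\,qp$, with $x$ invertible exactly when $a,b\in K^\times$ and inverse $x^{-1}=a^{-1}pq+b^{-1}qp$.

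\textbf{Step 1: the norm condition.}
Next I compute Clifford conjugation on the normal form. Using $\widetilde{vw}=wv$ for vectors, $\widetilde{pq}=qp$ and $\widetilde{qp}=pq$. Hence $\tilde x=a\,qp+b\,pq$, and
\[
x\tilde x=(a\,pq+b\,qp)(b\,pq+a\,qp)=ab\,pq+ab\,qp=ab.
\]
So the condition $x\tilde x=1$ is exactly $ab=1$. The same computation gives $\tilde x x=ab$ as well.

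\textbf{Step 2: the conjugation formulas.}
Using $p\cdot pq=0$, $pq\cdot p=p$, $qp\cdot p=0$ and $p\cdot qp=p$, I get $xp=ap$ and $px^{-1}=b^{-1}p$. Therefore
\[
xpx^{-1}=ab^{-1}p=a^2p \qquad\text{when } b=a^{-1}.
\]
Symmetrically, $xqx^{-1}=ba^{-1}q=a^{-2}q$. In particular every even unit preserves $V$, so $\Spin(Q)$ is exactly $\{a\,pq+a^{-1}qp : a\in K^\times\}$. Its image acts by $(x,y)\mapsto(a^2x,a^{-2}y)$, which lies in the square-scaling subgroup.

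\textbf{Step 3: surjectivity onto the square-scaling subgroup.}
Any $s\in(K^\times)^2$ can be written $s=a^2$, and $x_a=a\,pq+a^{-1}qp$ lies in $\Spin(Q)$ and realizes the scaling by $s$. Alternatively, this direction already follows from Proposition~\ref{prop:chosen-line-square} with $W=K$.

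\textbf{Step 4: matching the dictionary.}
Finally I identify $\SO(\Hyp(K))$ with the reciprocal scalings $(x,y)\mapsto(\alpha x,\alpha^{-1}y)$. The isometries of $\Hyp(K)$ either preserve or swap the two isotropic lines, and the swapping ones have determinant $-1$. This matches the square-scaling subgroup to the displayed image.

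The only delicate point, which I expect to be the main obstacle, is bookkeeping of sign and convention: checking that conjugation on $pq$ really gives $qp$ under the paper's convention, and keeping track of whether $p$ corresponds to the $W^*$ or the $W$ coordinate. A swap there only exchanges $a^2$ and $a^{-2}$, which leaves the subgroup unchanged.
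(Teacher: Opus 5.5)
Your proposal is correct and follows the same route as the paper. Both use the normal form $x=a\,pq+b\,qp$ built from the orthogonal idempotents $pq,qp$, the computation $x\tilde x=ab$, and the direct conjugation formulas $xpx^{-1}=a^2p$ and $xqx^{-1}=a^{-2}q$; the differences are cosmetic. You invert $x$ directly instead of using $x^{-1}=\tilde x$, you give the surjectivity witness $a\,pq+a^{-1}qp$ explicitly rather than only citing Proposition~\ref{prop:chosen-line-square}, and you add the (correct but not required) check that $\SO(\Hyp(K))$ consists of reciprocal scalings.
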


\begin{proof}
Because the even part is spanned by $pq$ and $qp$, every even element has the
stated form. Using the relations $p^2=q^2=0$ and $pq+qp=1$, one has
\[
  (pq)^2=pq,\qquad (qp)^2=qp,\qquad pq\,qp=qp\,pq=0,\qquad \widetilde{pq}=qp.
\]
Hence
\[
  \tilde x = a\,qp+b\,pq,
\]
so
\[
  x\tilde x
  =(a\,pq+b\,qp)(a\,qp+b\,pq)
  =ab(pq+qp)
  =ab.
\]
Thus the spin condition $x\tilde x=1$ is equivalent to $ab=1$, and in that case
$x^{-1}=\tilde x$. Moreover,
\[
  xp=(a\,pq+b\,qp)p=a\,p,\qquad xq=(a\,pq+b\,qp)q=b\,q,
\]
because $pqp=p$, $qpp=0$, $pqq=0$, and $qpq=q$. Therefore
\[
  xpx^{-1}=a\,p(a\,qp+b\,pq)=a^2p,
  \qquad
  xqx^{-1}=b\,q(a\,qp+b\,pq)=b^2q=a^{-2}q.
\]
Hence the induced orthogonal action rescales the two isotropic lines by
reciprocal squares, so the image is contained in the square-scaling subgroup.
Conversely, Proposition~\ref{prop:chosen-line-square} specializes to the split
line and shows that every square-scaling occurs in the image. Thus the image is
the square-scaling subgroup.
\end{proof}

Thus the spin-to-orthogonal map on the split line is not surjective over fields
with nonsquare units. If the square map on $K^\times$ is surjective, then one
recovers the full split-line double cover. Combined with the positive split-rank
linear/projective descent picture, it shows that the exterior-model spin action
has a sharp two-level structure over a general field: projectively it depends
only on the orthogonal image in every positive split rank, while linearly it
remembers the scalar kernel element $-1$; in rank $1$, the image can
additionally miss nonsquare reciprocal scalings.

\section{Conclusion}
\label{sec:conclusion}

The classical exterior-model construction supplies the representation space
\(\bigwedge W\). The calculations above determine how the spin group maps
into the split Levi part of the orthogonal group over an arbitrary field
\(K\) with \(2\in K^\times\). The central structural result is that every split
Levi lift acts projectively as the natural exterior action. Explicit
transvection and square-scaling Clifford representatives yield the forward
square-determinant inclusion, while the line-projector argument and spin
unitarity supply the converse. In finite split rank at least three,
the spin image on the split Levi is therefore exactly the square-determinant
subgroup.

\section*{Statements and Declarations}

\paragraph{Competing Interests.}
The authors have no competing interests to declare that are relevant to the
content of this article.

\paragraph{Data Availability.}
The formal verification source code supporting the results of this article is
publicly available at \url{https://github.com/Arthur742Ramos/SpinorLean}.
No other datasets were generated or analysed during the current study.

\bibliographystyle{spmpsci}
\bibliography{refs}

@book{LawsonMichelsohn1989,
  author    = {Lawson, Jr., H. Blaine and Michelsohn, Marie-Louise},
  title     = {Spin Geometry},
  publisher = {Princeton University Press},
  series    = {Princeton Mathematical Series},
  volume    = {38},
  year      = {1989},
  address   = {Princeton, NJ},
  isbn      = {978-0-691-08542-5}
}

@book{Chevalley1997,
  author    = {Chevalley, Claude},
  title     = {The Algebraic Theory of Spinors and Clifford Algebras},
  publisher = {Springer},
  series    = {Collected Works},
  volume    = {2},
  year      = {1997},
  note      = {Reprint of the 1954 original with a postface by J.-P. Bourguignon}
}

@article{AtiyahBottShapiro1964,
  author  = {Atiyah, Michael F. and Bott, Raoul and Shapiro, Arnold},
  title   = {Clifford Modules},
  journal = {Topology},
  volume  = {3},
  number  = {Suppl. 1},
  pages   = {3--38},
  year    = {1964},
  doi     = {10.1016/0040-9383(64)90003-5}
}

@book{Lounesto2001,
  author    = {Lounesto, Pertti},
  title     = {Clifford Algebras and Spinors},
  edition   = {2nd},
  publisher = {Cambridge University Press},
  series    = {London Mathematical Society Lecture Note Series},
  volume    = {286},
  year      = {2001},
  address   = {Cambridge},
  isbn      = {978-0-521-80579-5}
}

@book{Artin1957GeometricAlgebra,
  author    = {Artin, Emil},
  title     = {Geometric Algebra},
  publisher = {Interscience Publishers},
  address   = {New York},
  year      = {1957},
  note      = {Reprinted by Wiley Classics Library, 1988}
}

@book{OMeara2000QuadraticForms,
  author    = {O'Meara, O. Timothy},
  title     = {Introduction to Quadratic Forms},
  publisher = {Springer},
  series    = {Classics in Mathematics},
  address   = {Berlin},
  year      = {2000},
  note      = {Reprint of the 1963 edition}
}

@article{Baez2002,
  author  = {Baez, John C.},
  title   = {The Octonions},
  journal = {Bulletin of the American Mathematical Society},
  volume  = {39},
  number  = {2},
  pages   = {145--205},
  year    = {2002},
  doi     = {10.1090/S0273-0979-01-00934-X}
}

@misc{LeanGA,
  author       = {Wieser, Eric and Song, Utensil},
  title        = {{lean-ga}: Geometric Algebra in {Lean}},
  howpublished = {\url{https://github.com/pygae/lean-ga}},
  year         = {2020},
  note         = {Lean library formalizing geometric algebra; Lean~3 era}
}

@misc{Mathlib,
  author       = {{The Mathlib Community}},
  title        = {{mathlib4}: The math library of {Lean} 4},
  howpublished = {\url{https://github.com/leanprover-community/mathlib4}},
  year         = {2024},
  note         = {Includes {\tt LinearAlgebra.CliffordAlgebra},
                  {\tt LinearAlgebra.ExteriorAlgebra},
                  {\tt spinGroup}, and {\tt pinGroup}}
}

@misc{SpinorLeanRepo,
  author       = {Ramos, Arthur F. and Hulak, David B.},
  title        = {{SpinorLean}: {Lean} Companion Repository for Exterior-Model
                  Spinors in Split Rank},
  howpublished = {\url{https://github.com/Arthur742Ramos/SpinorLean}},
  year         = {2026},
  note         = {GitHub repository}
}

@incollection{deMouraUllrich2021,
  author    = {de Moura, Leonardo and Ullrich, Sebastian},
  title     = {The {Lean} 4 Theorem Prover and Programming Language},
  booktitle = {Automated Deduction -- {CADE} 28},
  editor    = {Platzer, Andr{\'e} and Sutcliffe, Geoff},
  series    = {Lecture Notes in Computer Science},
  volume    = {12699},
  pages     = {625--635},
  publisher = {Springer},
  address   = {Cham},
  year      = {2021},
  doi       = {10.1007/978-3-030-79876-5_37}
}

@article{Wieser2022Clifford,
  author  = {Wieser, Eric and Song, Utensil},
  title   = {Formalizing Geometric Algebra in {Lean}},
  journal = {Advances in Applied Clifford Algebras},
  volume  = {32},
  number  = {3},
  pages   = {Article 28},
  year    = {2022},
  doi     = {10.1007/s00006-021-01164-1}
}

@inproceedings{MathlibCommunity2020,
  author    = {{The Mathlib Community}},
  title     = {The {Lean} Mathematical Library},
  booktitle = {Proceedings of the 9th {ACM} {SIGPLAN} International Conference
               on Certified Programs and Proofs},
  pages     = {367--381},
  publisher = {ACM},
  year      = {2020},
  doi       = {10.1145/3372885.3373824},
  note      = {Common scholarly reference for the mathlib ecosystem}
}

\end{document}